\documentclass{article}%
\usepackage{amsmath}
\usepackage[nomarginpar]{geometry}
\usepackage{graphicx}
\usepackage{amsfonts}
\usepackage{amssymb}%
\setcounter{MaxMatrixCols}{30}
%TCIDATA{OutputFilter=latex2.dll}
%TCIDATA{Version=5.50.0.2960}
%TCIDATA{CSTFile=40 LaTeX article.cst}
%TCIDATA{Created=Monday, June 03, 2013 12:28:59}
%TCIDATA{LastRevised=Tuesday, June 18, 2013 14:36:45}
%TCIDATA{<META NAME="GraphicsSave" CONTENT="32">}
%TCIDATA{<META NAME="SaveForMode" CONTENT="1">}
%TCIDATA{BibliographyScheme=Manual}
%TCIDATA{<META NAME="DocumentShell" CONTENT="Standard LaTeX\Blank - Standard LaTeX Article">}
%BeginMSIPreambleData
\providecommand{\U}[1]{\protect\rule{.1in}{.1in}}
%EndMSIPreambleData
\newtheorem{theorem}{Theorem}

\newtheorem{lemma}[theorem]{Lemma}

\newenvironment{proof}[1][Proof]{\noindent\textbf{#1.} }{\ \rule{0.5em}{0.5em}}
\graphicspath{{D:/Dropbox/HararyWeight/Slike/}}
\begin{document}

\title{Extremal unicyclic graphs with respect to additively weighted Harary index}
\author{Jelena Sedlar\\University of Split, Faculty of civil engineering, architecture and geodesy, \\Matice hrvatske 15, HR-21000, Split, Croatia.}
\maketitle

\begin{abstract}
In this paper we define cycle-star graph $CS_{k,n-k}$ to be a graph on $n$
vertices consisting of the cycle of length $k$ and $n-k$ leafs appended to the
same vertex of the cycle. Also, we define cycle-path graph $CP_{k,n-k}$ to be
a graph on $n$ vertices consisting of the cycle of length $k$ and of path on
$n-k$ vertices whose one end is linked to a vertex on a cycle. We establish
that cycle-star graph $CS_{3,n-3}$ is the only maximal graph with respect to
additively weighted Harary index among all unicyclic graphs on $n$ vertices,
while cycle-path graph $CP_{3,n-3}$\ is the only minimal unicyclic graph (here
$n$ must be at least 5)$.$ The values of additively weighted Harary index for
extremal unicyclic graphs are established, so these values are the upper and
the lower bound for the value of additively weighted Harary index on the class
of unicyclic graphs on $n$ vertices.

\end{abstract}

%

%TCIMACRO{\TeXButton{TeX field}{{\bf Keywords:}
%Additively weighted Harrary index, Unicyclic graph, Extremal graph.}}%
%BeginExpansion
{\bf Keywords:}
Additively weighted Harrary index, Unicyclic graph, Extremal graph.%
%EndExpansion
%

%TCIMACRO{\TeXButton{TeX field}{{\bf AMS Subject Classifcation:} 05C35}}%
%BeginExpansion
{\bf AMS Subject Classifcation:} 05C35%
%EndExpansion

\section{Introduction}

A topological index of a graph is a number attributed to a graph in a way that
it is derived from the structure of the graph but so that it doesn't depend on
the labeling of vertices in a graph. Chemical graph theory is a branch of
graph theory whose focus of interest is finding topological indices of
chemical graphs (i.e. graphs that represent chemical molecules) which
correlate well with chemical properties of the corresponding molecules. One of
the most famous topological indices is Wiener index, defined as the sum of all
distances between different vertices of a molecular graph, introduced by
Wiener in 1947 (see \cite{ref_Wiener(1947)}) in a paper concerned with boiling
points of alkanes. In a research that followed many other useful properties of
Wiener index were discovered (for a survey of mathematical properties and
chemical applications of Wiener index one can look into
\cite{ref_Dobrynin(2001)}, \cite{ref_Dobrynin(2002)}, \cite{ref_Nikolic(1995)}%
). But, contrary to chemical intuition, the contribution of close pairs of
vertices to the overall value of the index was much smaller than that of
distant vertices. To deal with this inconsistency, the new index was proposed
which was named Harary index (\cite{ref_Ivanciuc(1993)},
\cite{ref_Olavsic(1993)}). Harary index is defined as the sum of all
reciprocal values of distances between different vertices of a molecular
graph. Properties of Harary index were then extensively researched (see for
example \cite{ref_SvojstvaIlic}, \cite{ref_SvojstvaWagner},
\cite{ref_HararyUnicyclic}, \cite{ref_Zhou(2008)}). But it turned out that
this modification of Wiener index has not solved the inconsistency as
expected. In order to improve the performance of Harary-type indices, several
modifications were proposed recently. In \cite{ref_Bruckler(2011)} the authors
increased the attenuation of contributions of vertex pairs with their
distance. In \cite{ref_Doslic(2013)} an attempt was made in a different
direction, the authors introduced a correction that gives more weight to the
contributions of pairs of vertices of high degrees. This modification was
named additively weighted Harary index. The aim of this paper is to establish
the upper and lower bound for the value of additively weighted Harary index on
the class of unicyclic graphs and to characterize all extremal graphs.

The present paper is organized as follows. In Section 2 we introduce necessary
definitions and preliminary results. In Section 3 we characterize all maximal
unicyclic graphs by introducing several graph transformations which increase
the value of additively weighted Harary index and which, when applied combined
finitely many times, lead to extremal unicyclic graphs. In Section 4 we do the
same for minimal unicyclic graphs with respect to additively weighted Harary
index. Finally, in Section 5 we give the conclusion and directions for further
research. The paper is completed with acknowledgements and the list of references.

\section{Preliminaries}

Let $G=(V,E)$ be a graph with set of vertices $V$ and set of edges $E.$ In
this paper all graphs are finite and simple. For a pair of vertices $u,v\in V$
the \emph{distance} $d_{G}(u,v)$ is defined as the length of the shortest path
between $u$ and $v.$ The \emph{degree} $\delta_{G}(v)$ of a vertex $v\in V$ is
defined as the number of vertices in $V$ neighboring to $v.$ A \emph{leaf} in
a graph $G$ is every vertex in $G$ of degree $1.$ \emph{Additively weighted
Harary index} is defined as
\[
H_{A}(G)=%
%TCIMACRO{\dsum _{u\not =v}}%
%BeginExpansion
{\displaystyle\sum_{u\not =v}}
%EndExpansion
\frac{\delta_{G}(u)+\delta_{G}(v)}{d_{G}(u,v)}.
\]
We say that graph $G$ is a \emph{tree} if $G$ doesn't contain a cycle. We say
that $G$ is a \emph{unicyclic} graph if $G$ contains exactly one cycle. The
set of all unicyclic graphs on $n$ vertices will be denoted by $\mathcal{U}%
(n).$ The only cycle in a unicyclic graph $G$ will be denoted by $C$ and the
length of $C$ will be denoted by $c.$ Usually, we will suppose $C$ consists of
path $P=w_{i}w_{i+1}\ldots w_{i+c-1}$ and an edge $w_{i}w_{i+c-1}.$ Starting
index $i$ will not always be the same. Vertex $w_{i-j}$ will denote $j-$th
vertex on $C$ from $w_{j}$ in negative direction, while $w_{i+j}$ will denote
$j-$th vertex on $C$ from $w_{j}$ in positive direction. Note that in this way
we possibly introduce alternative labels of vertices on $C.$ For example, if
$C=w_{0}w_{1}\ldots w_{5},$ then $w_{-2}=w_{4}$ and $w_{7}=w_{1}.$ In other
words, indices are added modulo $c$. We say $v\in G\backslash C$ is
\emph{branching} if $\delta_{G}(v)\geq2,$ we say $v\in C$ is \emph{branching}
if $\delta_{G}(v)\geq3.$

A \emph{cycle-star graph} is a unicyclic graph consisting only of a cycle and
leafs appended to vertices of the cycle will be called cycle-star graph. Note
that a cycle-star graph can be obtained from stars and cycle by identifying
central vertex of stars with different vertices on cycle, hence the name. A
cycle-path graph is a unicyclic graph consisting only of a cycle and at most
one path appended to each vertex of the cycle. We denote with $CS_{k,n-k}$ a
cycle-star graph on $n$ vertices consisting of cycle of length $k$ with $n-k$
leafs appended to the same vertex of the cycle. We denote with $CP_{k,n-k}$ a
cycle-path graph on $n$ vertices consisting of cycle of length $k$ with path
on $n-k$ vertices whose end vertex is linked to a vertex on the cycle. These
notions are illustrated in Figures \ref{Figure01} and \ref{Figure02}.

\begin{figure}[h]
\begin{center}%
\begin{tabular}
[c]{llll}%
a) & $\text{\raisebox{-1\height}{\includegraphics[scale=0.2]{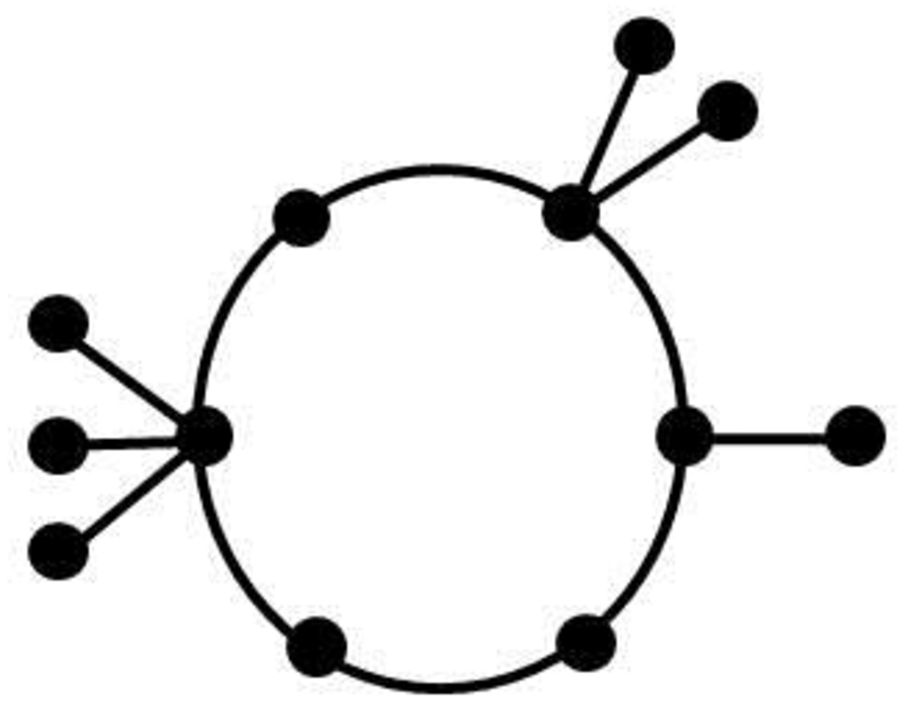}}}$
& b) & $\text{\raisebox{-1\height}{\includegraphics[scale=0.2]{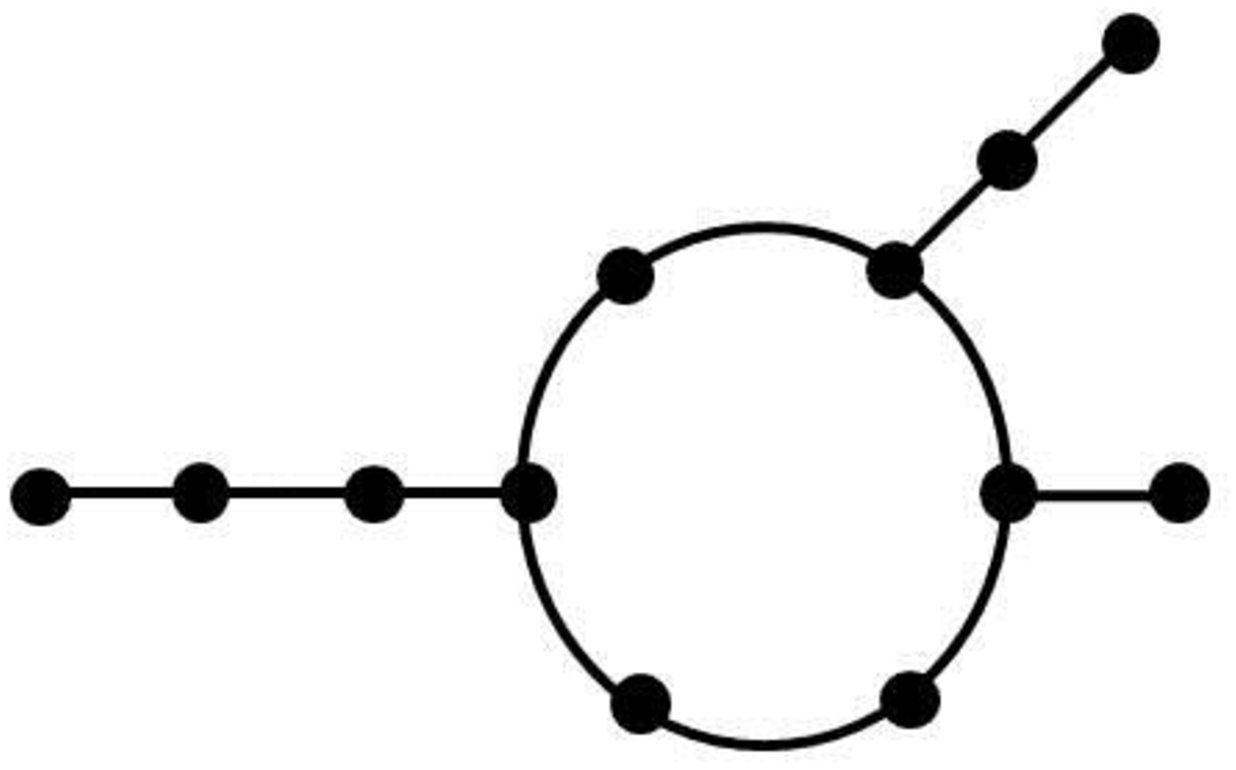}}}%
$%
\end{tabular}
\end{center}
\caption{An example of: a) cycle-star, b) cycle path.}%
\label{Figure01}%
\end{figure}

\begin{figure}[h]
\begin{center}%
\begin{tabular}
[c]{llll}%
a) & $\text{\raisebox{-1\height}{\includegraphics[scale=0.2]{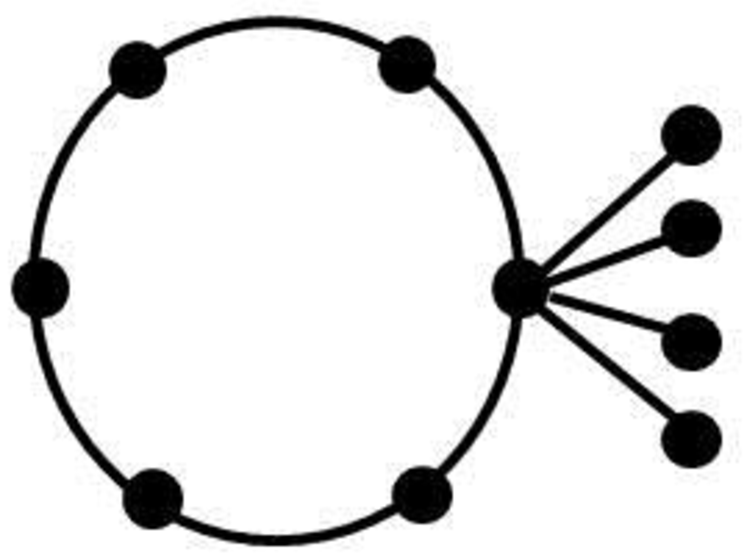}}}$
& b) & $\text{\raisebox{-1\height}{\includegraphics[scale=0.2]{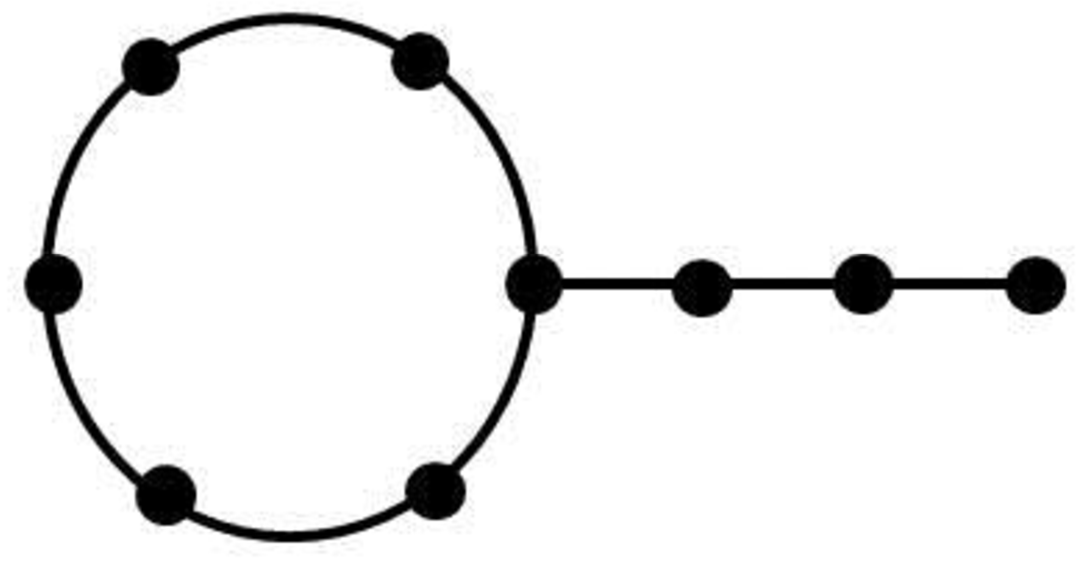}}}%
$%
\end{tabular}
\end{center}
\caption{Graphs: a) $CS_{6.4}$, b) $CP_{6,3}$.}%
\label{Figure02}%
\end{figure}

The cycle on $n$ vertices will be denoted by $C_{n}.$ Obviously, it holds that
$C_{n}\in\mathcal{U}(n).$ It is also convenient to consider that $C_{n}$ is
both a cycle-star and a cycle-path graph, i.e. $C_{n}=CS_{n,0}=CP_{n,0}.$

Finally, we have to introduce numbers which are quite useful when expressing
the values of Harary index. \emph{Harmonic number} $H_{n}$, where $n\geq1$ is
an integer, is defined as $H_{n}=%
%TCIMACRO{\dsum _{i=1}^{n}}%
%BeginExpansion
{\displaystyle\sum_{i=1}^{n}}
%EndExpansion
\frac{1}{i}.$

Before we proceed to our main results, we will prove two simple lemmas which
will be useful later.

\begin{lemma}
\label{lema0_valueExtremal}It holds that:

\begin{enumerate}
\item $H_{A}(CS_{3,n-3})=\frac{3}{2}(n^{2}-n+2),$

\item $H_{A}(CP_{3,n-3})=4%
%TCIMACRO{\dsum _{i=1}^{n-2}}%
%BeginExpansion
{\displaystyle\sum_{i=1}^{n-2}}
%EndExpansion
H_{n-i-1}+H_{n-3}+3H_{n-2}+\frac{6n-13}{n-2}.$
\end{enumerate}
\end{lemma}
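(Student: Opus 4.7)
Part (1) is a direct enumeration. Label the triangle vertex of $CS_{3,n-3}$ that carries the $n-3$ leaves by $v$, the two remaining triangle vertices by $x,y$, and the leaves by $\ell_1,\dots,\ell_{n-3}$; the degrees are $\delta(v)=n-1$, $\delta(x)=\delta(y)=2$, $\delta(\ell_i)=1$. Split the pairs into four types (triangle--triangle, hub--leaf at distance $1$, off-hub-triangle--leaf at distance $2$, and leaf--leaf at distance $2$), multiply each class count by its per-pair contribution $(\delta_u+\delta_v)/d(u,v)$, and collect a quadratic in $n$; the total simplifies to $\tfrac{3}{2}(n^2-n+2)$.

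For part (2), label the cycle $u_0w_1w_2$ with $u_0$ the vertex that carries the path, and write the appended path as $u_0u_1\dots u_{n-3}$, so $u_{n-3}$ is the unique leaf. The degrees are $\delta(u_0)=3$, $\delta(w_1)=\delta(w_2)=2$, $\delta(u_i)=2$ for $1\le i\le n-4$, and $\delta(u_{n-3})=1$. I would decompose $H_A$ by where the two endpoints lie: block (A) the three pairs inside the triangle; block (B) pairs $\{u_0,u_i\}$ with $d=i$; block (C) pairs $\{w_j,u_i\}$ with $d=i+1$ for $j\in\{1,2\}$; and block (D) pairs $\{u_i,u_j\}$ with $d=j-i$ for $1\le i<j\le n-3$. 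In each of (B), (C), (D) the pair involving $u_{n-3}$ is isolated so that the remainder has a uniform degree profile.

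Blocks (A)--(C) each reduce in one step using $\sum_{i=1}^{m}\tfrac{1}{i}=H_m$ and $\sum_{i=1}^{m}\tfrac{1}{i+1}=H_{m+1}-1$: they give $14$, $5H_{n-4}+\tfrac{4}{n-3}$, and $8(H_{n-3}-1)+\tfrac{6}{n-2}$ respectively. Block (D) carries the essential harmonic structure: holding $i$ fixed, the inner sum $\sum_{j=i+1}^{n-4}\tfrac{4}{j-i}=4H_{n-4-i}$ makes the non-leaf contribution equal to $4\sum_{i=1}^{n-5}H_{n-4-i}$, and the leaf piece contributes $3H_{n-4}$. Combining all five pieces and re-indexing, the object $\sum_{i=1}^{n-2}H_{n-i-1}$ demanded by the claim appears after the $H$-terms from blocks (B)--(C) are absorbed into the running harmonic sum via $H_{n-3}=H_{n-4}+\tfrac{1}{n-3}$ and $H_{n-2}=H_{n-3}+\tfrac{1}{n-2}$.

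The hard part is the final bookkeeping: the leaf produces three exceptional rational terms ($\tfrac{4}{n-3}$, $\tfrac{6}{n-2}$, $\tfrac{3}{n-2}$), the constants $14$ from (A) and $-8$ from (C) must be absorbed, and the harmonic shifts generate additional rational corrections, all of which must conspire so that the residual rational tail is exactly $\tfrac{6n-13}{n-2}$ (the identity $6-\tfrac{7}{n-2}=\tfrac{6n-19}{n-2}$ is the key cancellation). A small-$n$ sanity check, for instance verifying $H_A(CP_{3,2})=30$ when $n=5$, is a convenient guard against off-by-one errors, and I would do the final collection with all coefficients of each $H_k$ tracked side by side to ensure nothing is miscounted.
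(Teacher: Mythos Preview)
Your proposal is correct and matches the paper's approach exactly: the paper's entire proof is the single phrase ``By direct calculation,'' and your block decomposition (triangle pairs, hub--path pairs, off-hub--path pairs, path--path pairs, with the leaf singled out) is a valid way to carry that calculation out. The intermediate totals you list for blocks (A)--(D) are right, and re-expressing $8H_{n-4}+8H_{n-3}+6+\tfrac{4}{n-3}+\tfrac{6}{n-2}$ via $H_{n-3}=H_{n-4}+\tfrac{1}{n-3}$ and $H_{n-2}=H_{n-4}+\tfrac{1}{n-3}+\tfrac{1}{n-2}$ indeed collapses to the claimed $4H_{n-4}+5H_{n-3}+7H_{n-2}+\tfrac{6n-13}{n-2}$, so the final bookkeeping goes through (your stray mention of a $\tfrac{3}{n-2}$ term and of $\tfrac{6n-19}{n-2}$ are minor slips that do not affect the argument).
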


\begin{proof}
By direct calculation.
\end{proof}

\begin{lemma}
\label{lema0_pomocnaHarmonic}For odd $n\geq5$ it holds that $\frac{3}{2}%
(n^{2}-n+2)-4n\cdot H_{\frac{n-1}{2}}>0.$
\end{lemma}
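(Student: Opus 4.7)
The plan is to prove the inequality by induction on odd $n \geq 5$. Set
\[
f(n) = \tfrac{3}{2}(n^{2}-n+2) - 4n\cdot H_{(n-1)/2},
\]
so the claim is $f(n) > 0$. For the base case $n = 5$ I would substitute directly: $H_{2} = \tfrac{3}{2}$, so $f(5) = \tfrac{3}{2}\cdot 22 - 20\cdot \tfrac{3}{2} = 33 - 30 = 3 > 0$.

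For the inductive step, I would exploit the fact that passing from odd $n$ to odd $n+2$ adds exactly one term to the harmonic number, giving the recurrence $H_{(n+1)/2} = H_{(n-1)/2} + \tfrac{2}{n+1}$. Expanding $(n+2)^{2}-(n+2)+2 = (n^{2}-n+2) + (4n+2)$ and substituting the recurrence, a short algebraic calculation yields
\[
f(n+2) - f(n) \;=\; 6n + 3 \;-\; 8\,H_{(n-1)/2} \;-\; \tfrac{8(n+2)}{n+1}.
\]
It then suffices to show this difference is positive. Using the very loose bounds $H_{(n-1)/2} \leq (n-1)/2$ (each summand in the harmonic number is at most $1$) and $\tfrac{8(n+2)}{n+1} = 8 + \tfrac{8}{n+1} \leq 10$ for $n \geq 5$, I obtain
\[
f(n+2) - f(n) \;\geq\; 6n + 3 - 4(n-1) - 10 \;=\; 2n - 3 \;>\; 0.
\]
Combining this with the base case closes the induction.

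There is no real obstacle in this argument: the only substantive step is the algebraic expansion of $f(n+2) - f(n)$ via the harmonic recurrence, and once this is in hand the comparison is immediate because the linear quantity $6n+3$ comfortably dominates the logarithmically-growing term $8\,H_{(n-1)/2}$. Should a sharper estimate ever be needed (for instance in a variant of the lemma), one could replace the crude bound $H_{m} \leq m$ by the standard $H_{m} \leq 1 + \ln m$, but the margin $2n-3$ obtained here leaves ample room.
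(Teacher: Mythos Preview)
Your proof is correct and follows the same inductive strategy as the paper: both verify $f(5)=3>0$ and then compute the increment $f(n+2)-f(n)=6n+3-8H_{(n-1)/2}-\tfrac{8(n+2)}{n+1}$. The only difference is in how this increment is shown to be positive: the paper bounds $\tfrac{8(n+2)}{n+1}\leq 16$ and then runs a \emph{second} induction to establish $6n-13-8H_{(n-1)/2}>0$, whereas you dispose of it in one stroke via the crude estimates $H_{(n-1)/2}\leq (n-1)/2$ and $\tfrac{8(n+2)}{n+1}\leq 10$, leaving a comfortable margin of $2n-3$. Your version is slightly cleaner in that it avoids the nested induction.
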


\begin{proof}
We prove the claim by induction. We introduce notation $E(n)$ for the left
side of inequality. For $n=5$ we have $E(5)=3>0.$ Suppose now that the claim
holds for a $n\geq5,$ then for $n+2$ we have
\begin{align*}
E(n+2)  &  =E(n)+\frac{3}{2}(4n+2)-\frac{8n}{n+1}-\frac{16}{n+1}%
-8H_{\frac{n-1}{2}}\geq\lbrack8n\leq16n]\geq\\
&  \geq E(n)+\frac{3}{2}(4n+2)-\frac{16n}{n+1}-\frac{16}{n+1}-8H_{\frac
{n-1}{2}}=\\
&  =E(n)+6n-13-8H_{\frac{n-1}{2}}%
\end{align*}
Now, we have to prove $6n-13-8H_{\frac{n-1}{2}}>0,$ which is again done by
induction. This time left side of inequality is denoted by $F(n).$ We have
$F(5)=5>0.$ Supposing $F(n)>0$ for a $n\geq5,$ we obtain
\[
F(n+2)=F(n)+\frac{12n-4}{n+1}>0.
\]

\end{proof}

\section{Maximal unicyclic graphs}

We will find maximal graphs by introducing transformations of graph $G$ to
$G^{\prime}$ which \textbf{increase} the value of $H_{A},$ therefore
$H_{A}(G)<H_{A}(G^{\prime}),$ i.e. $H_{A}(G^{\prime})-H_{A}(G)>0.$ Since
$H_{A}(G^{\prime})-H_{A}(G)$ is a sum over all pairs of vertices in $G,$ it is
very convenient to introduce notation $\Delta(u,v)$ for the contribution of a
pair $u,v\in G$ to the sum $H_{A}(G^{\prime})-H_{A}(G)$. The problem to solve
will be negative contributions of certain pairs of vertices, for which we will
have to find enough pairs with positive contribution to compensate in the sum.

\begin{lemma}
\label{lema1_1iznimkaCnOdd}For odd $n\geq5$ it holds that $H_{A}(C_{n}%
)<H_{A}(CS_{3,n-3}).$
\end{lemma}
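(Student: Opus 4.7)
The plan is to evaluate $H_A(C_n)$ in closed form and then reduce the desired inequality to a direct application of the two preliminary lemmas in Section~2.

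First I would observe that in $C_n$ every vertex has degree~$2$, so every unordered pair $\{u,v\}$ contributes $(2+2)/d_{C_n}(u,v) = 4/d_{C_n}(u,v)$ to $H_A(C_n)$. Because $n$ is odd, for any fixed vertex $v\in C_n$ the multiset of distances from $v$ to the remaining $n-1$ vertices is exactly
\[
\{1,1,2,2,\ldots,\tfrac{n-1}{2},\tfrac{n-1}{2}\},
\]
so $\sum_{u\neq v} 1/d_{C_n}(u,v) = 2H_{(n-1)/2}$. Summing this identity over all $v\in C_n$ and dividing by $2$ to pass from ordered to unordered pairs gives the closed form
\[
H_A(C_n) = 4n\,H_{\frac{n-1}{2}}.
\]

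Second, by Lemma~\ref{lema0_valueExtremal}(1) we have $H_A(CS_{3,n-3}) = \tfrac{3}{2}(n^2-n+2)$. Substituting both expressions, the inequality $H_A(C_n) < H_A(CS_{3,n-3})$ becomes
\[
\tfrac{3}{2}(n^2-n+2) - 4n\,H_{\frac{n-1}{2}} > 0,
\]
which is precisely the conclusion of Lemma~\ref{lema0_pomocnaHarmonic}. This finishes the proof.

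There is essentially no genuine obstacle in this lemma; the real work has already been packaged into Lemma~\ref{lema0_valueExtremal} and, more importantly, into the double-induction argument of Lemma~\ref{lema0_pomocnaHarmonic}. The only points to handle carefully are the bookkeeping in the evaluation of $H_A(C_n)$, namely avoiding double-counting when summing over vertices and using the odd parity of $n$ to assert that each nonzero distance $1,2,\ldots,(n-1)/2$ from a fixed vertex is attained exactly twice.
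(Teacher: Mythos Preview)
Your proof is correct and follows exactly the same approach as the paper: compute $H_{A}(C_{n})=4n\,H_{(n-1)/2}$, invoke Lemma~\ref{lema0_valueExtremal} for $H_{A}(CS_{3,n-3})$, and conclude via Lemma~\ref{lema0_pomocnaHarmonic}. The only difference is that you spell out the distance-counting argument for $H_{A}(C_{n})$ in more detail than the paper's ``easily verified''.
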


\begin{proof}
For odd $n,$ it is easily verified that $H_{A}(C_{n})=4n\cdot H_{\frac{n-1}%
{2}}.$ From Lemma \ref{lema0_valueExtremal} we have $H_{A}(CS_{3,n-3}%
)=\frac{3}{2}(n^{2}-n+2).$ Now the claim follows from Lemma
\ref{lema0_pomocnaHarmonic}.
\end{proof}

\begin{figure}[h]
\begin{center}
\includegraphics[scale=0.2]{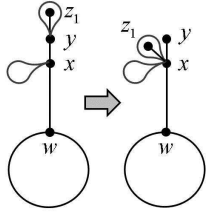}
\end{center}
\caption{Graph transformation from the proof of Lemma
\ref{lema1_2arbitraryToCyclestar}.}%
\label{Figure05}%
\end{figure}

\begin{lemma}
\label{lema1_2arbitraryToCyclestar}For every $G\in\mathcal{U}(n)$ which is not
a cycle-star graph, there is a cycle-star graph $G^{\prime}\in\mathcal{U}(n)$
such that $H_{A}(G)<H_{A}(G^{\prime}).$
\end{lemma}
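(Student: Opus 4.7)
The plan is to prove the lemma inductively by repeatedly applying a local transformation that strictly increases $H_A$ until the resulting graph becomes a cycle-star. To measure progress, I would introduce the potential $\mu(G) = \sum_{v \notin C} d_G(v, C)$, where $C$ is the unique cycle of $G$. One has $\mu(G) \geq n - c$ with equality exactly when every non-cycle vertex is a leaf attached to $C$, so $G$ fails to be a cycle-star precisely when $\mu(G) > n - c$. It thus suffices to exhibit, for any such $G$, some $G_{1} \in \mathcal{U}(n)$ with $\mu(G_{1}) < \mu(G)$ and $H_A(G) < H_A(G_{1})$; iteration and the finiteness of $\mathcal{U}(n)$ then yield the required cycle-star $G^{\prime}$.

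The transformation (illustrated in Figure \ref{Figure05}) is the natural one: choose a leaf $\ell$ at maximum distance $d \geq 2$ from $C$, let $v$ be its unique neighbor (at distance $d - 1$ from $C$), and let $w$ be the neighbor of $v$ on the shortest $v$-to-$C$ path. Form $G_{1}$ by deleting the edge $\ell v$ and inserting the edge $\ell w$. Then $G_{1}$ is still unicyclic on $n$ vertices, satisfies $\mu(G_{1}) = \mu(G) - 1$, and has $\delta_{G_{1}}(v) = \delta_G(v) - 1$, $\delta_{G_{1}}(w) = \delta_G(w) + 1$, with all other degrees unchanged. The maximality of $d$ forces every neighbor of $v$ other than $w$ to also be a leaf, which will streamline the bookkeeping below.

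The core of the argument is the sign computation of $H_A(G_{1}) - H_A(G) = \sum_{x \neq y} \Delta(x, y)$. I would partition the pairs $\{x, y\}$ by their interaction with $\{\ell, v, w\}$. Pairs avoiding all three vertices contribute $0$ because both degrees and distances are preserved. Pairs containing $\ell$ contribute positively because $\delta(\ell) = 1$ is preserved while $d_{G_{1}}(\ell, y) \leq d_G(\ell, y)$, with strict inequality for every $y$ on the cycle side of $v$. Pairs involving $v$ or $w$ but not $\ell$ are delicate: the degree drop at $v$ produces the negative term $-\sum_{y \neq v, \ell} 1/d_G(v, y)$, while the degree gain at $w$ produces the positive term $+\sum_{y \neq w} 1/d_{G_{1}}(w, y)$.

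The main obstacle will be showing that the negative contributions from $v$ are dominated by the combined positive contributions from $w$ and from $\ell$. Because $v$ and $w$ are adjacent, the distances $d(v, y)$ and $d(w, y)$ differ by at most $1$, so the two reciprocal-distance sums differ only by a controllable perturbation; the slack comes from pairs involving $\ell$, whose strictly shrinking distances guarantee a strictly positive reserve. Once $H_A(G) < H_A(G_{1})$ is established, induction on $\mu$ applied to $G_{1}$ completes the proof.
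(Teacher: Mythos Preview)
Your single-leaf transformation does not, in general, increase $H_A$. The claim that ``pairs containing $\ell$ contribute positively because $d_{G_1}(\ell,y)\le d_G(\ell,y)$'' is false: for $y=v$ the distance jumps from $1$ to $2$, and for every sibling leaf $\ell'$ of $\ell$ (another leaf adjacent to $v$, guaranteed to exist whenever $\delta_G(v)\ge 3$) the distance jumps from $2$ to $3$. These produce genuinely negative contributions that the gains at $w$ and along the cycle side need not absorb. Concretely, take $G$ to be the triangle $w_0w_1w_2$ with a vertex $v$ attached to $w_0$ and four leaves $\ell_1,\ell_2,\ell_3,\ell_4$ attached to $v$ (so $n=8$). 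Moving just $\ell_1$ from $v$ to $w_0$ gives $H_A(G)=75$ but $H_A(G_1)=74$, so $H_A$ strictly \emph{decreases}. Thus the inductive step as written fails, and the ``slack from pairs involving $\ell$'' you invoke is actually part of the deficit, not the reserve.

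The repair is exactly what the paper does: instead of relocating a single leaf, relocate \emph{all} the leaves $z_1,\dots,z_k$ hanging at $v$ (the paper's $y$) simultaneously to $w$ (the paper's $x$). After this move $v$ becomes a leaf, so there are no sibling leaves left to create the bad pairs above; each loss $\Delta(v,z_i)$ is then cleanly compensated by $\Delta(w,z_i)$ via
\[
\Delta(v,z_i)+\Delta(w,z_i)=\frac{\delta_G(w)-1}{1}-\frac{\delta_G(w)-1}{2}>0,
\]
and each remaining loss $\Delta(v,u)$ from the degree drop at $v$ is compensated by $\Delta(w,u)$, using that every such $u$ is strictly closer to $w$ than to $v$. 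Your potential $\mu$ still decreases (by $k$ rather than $1$), so the inductive framework survives once you switch to the batch move.
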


\begin{proof}
Let $C$ be the only cycle in $G$ and let $w\in C$ be the branching vertex. Let
$T$ be connected component of $G\backslash C$ containing $w$. Note that $T$
must be a tree. Let $z_{1}$ be a leaf in $T$ furthest from $w.$ Let $P=w\ldots
xyz_{1}$ be the shortest path between $w$ and $z_{1}.$ Let $z_{1},z_{2}%
,\ldots,z_{k}$ be all leafs neighboring to $y$. Since $z_{1}$ is a leaf
furthest from $w,$ it follows that $k=\delta_{G}(y)-1$. Let $G^{\prime}$ be a
graph obtained from $G$ by deleting edges $z_{i}y$ for $i=1,\ldots,k$ and
adding edges $z_{i}x$ instead. This transformation is illustrated in Figure
\ref{Figure05}. We have to prove that $H_{A}(G)<H_{A}(G^{\prime}).$ Because of
the definition of the index $H_{A},$ the problem are pairs of vertices whose
distance increases and vertices whose degree decreases. Note that in this
transformation the only pairs $u,v$ whose distance increases are pairs $u=y$
and $v=z_{i}.$ Also, the only vertex for which degree decreases is $y.$
Therefore, contributions $\Delta(y,z_{i})$ will be negative due to increase in
distance, while contributions $\Delta(y,v)$, where $v\in G,$ will (possibly)
be negative due to decrease in degree of vertex $y.$ Note that contributions
$\Delta(y,v)$ are not necessarily negative, since degree of $v$ can increase
or the distance $d_{G}(y,v)$ can also decrease.

Let us first consider the problem with increase in distance. We will show that
the negative contribution $\Delta(y,z_{i})$ is compensated by positive
contribution $\Delta(x,z_{i}).$ More formally, we have
\begin{align*}
\Delta(y,z_{i})+\Delta(x,z_{i})  &  =\frac{1+1}{2}-\frac{1+k+1}{1}%
+\frac{\delta_{G}(x)+k+1}{1}-\frac{\delta_{G}(x)+1}{2}=\\
&  =\frac{\delta_{G}(x)-1}{1}-\frac{\delta_{G}(x)-1}{2}>0.
\end{align*}
Let us now consider the problem with degrees. We have to consider
contributions $\Delta(y,v)$, where $v\in G.$ First, note that we have already
considered and compensated contributions $\Delta(y,z_{i}).$ Further, note that
not all of these contributions are necessarily negative, since there can be
increase in degree of the other vertex. So, for $v=x$ we have
\[
\Delta(y,x)=\frac{1+(k+\delta(x))}{1}-\frac{(k+1)+\delta_{G}(x)}{1}=0.
\]
For $v\in G\backslash\{x,y,z_{1},\ldots,z_{k}\}$ negative contribution
$\Delta(y,v)$ can be compensated with positive contribution of $\Delta(x,v). $
More formally, we have
\begin{align*}
\Delta(y,v)+\Delta(x,v)  &  =\frac{1+\delta_{G}(x)}{d_{G}(y,v)}-\frac
{k+1+\delta_{G}(x)}{d_{G}(y,v)}+\frac{k+\delta_{G}(x)+\delta_{G}(v)}%
{d_{G}(y,v)-1}-\frac{\delta_{G}(x)+\delta_{G}(v)}{d_{G}(y,v)-1}=\\
&  =\frac{k}{d_{G}(y,v)-1}-\frac{k}{d_{G}(y,v)}>0.
\end{align*}
Therefore, we have proved $H_{A}(G)<H_{A}(G^{\prime}).$ If $G^{\prime}$ is
cycle-star graph the proof is over, else this transformation must be repeated
finitely many times to obtain cycle-star graph and then the proof is over.
\end{proof}

\begin{figure}[h]
\begin{center}%
\begin{tabular}
[c]{llll}%
a) & $\text{\raisebox{-1\height}{\includegraphics[scale=0.2]{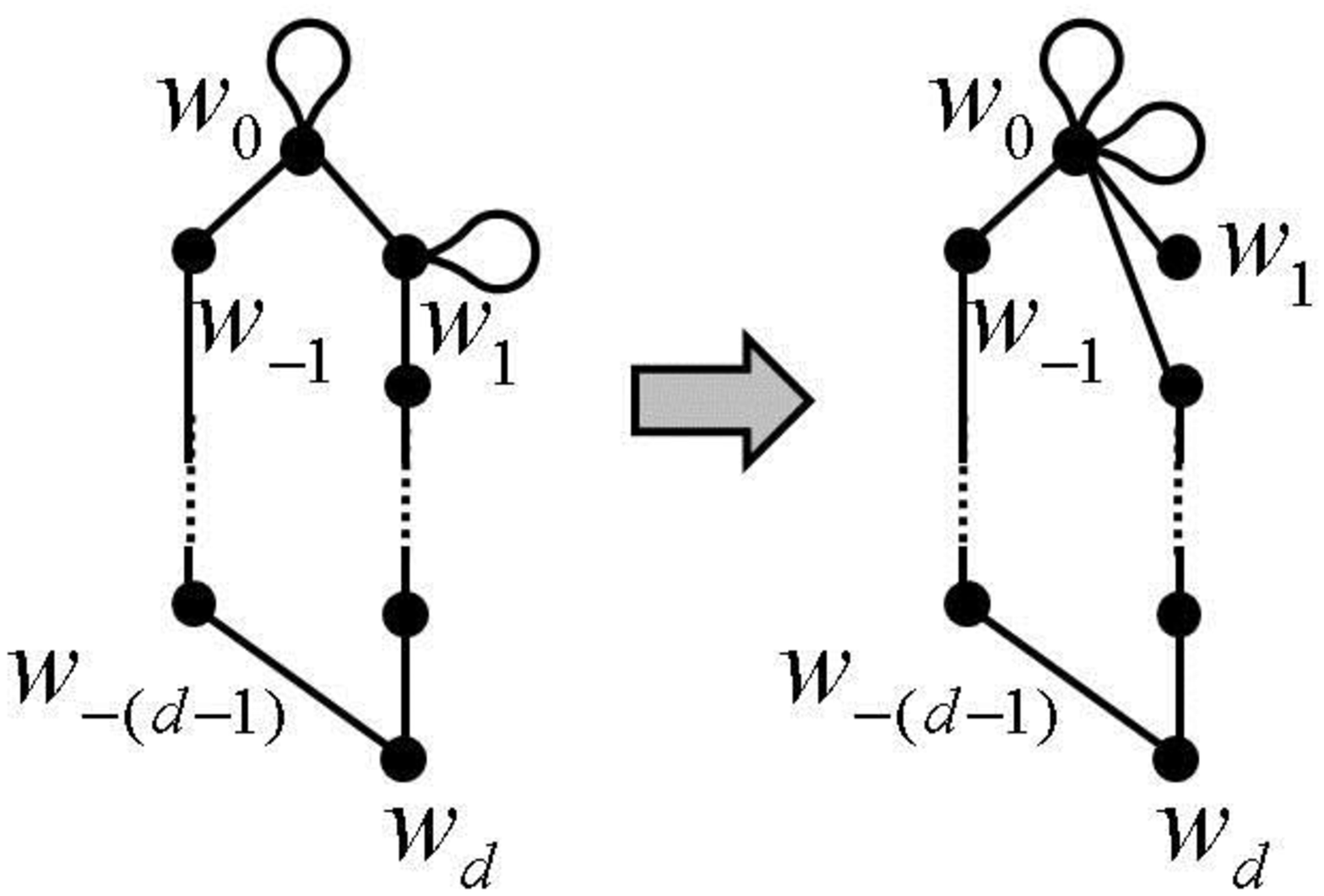}}}$
& b) & $\text{\raisebox{-1\height}{\includegraphics[scale=0.2]{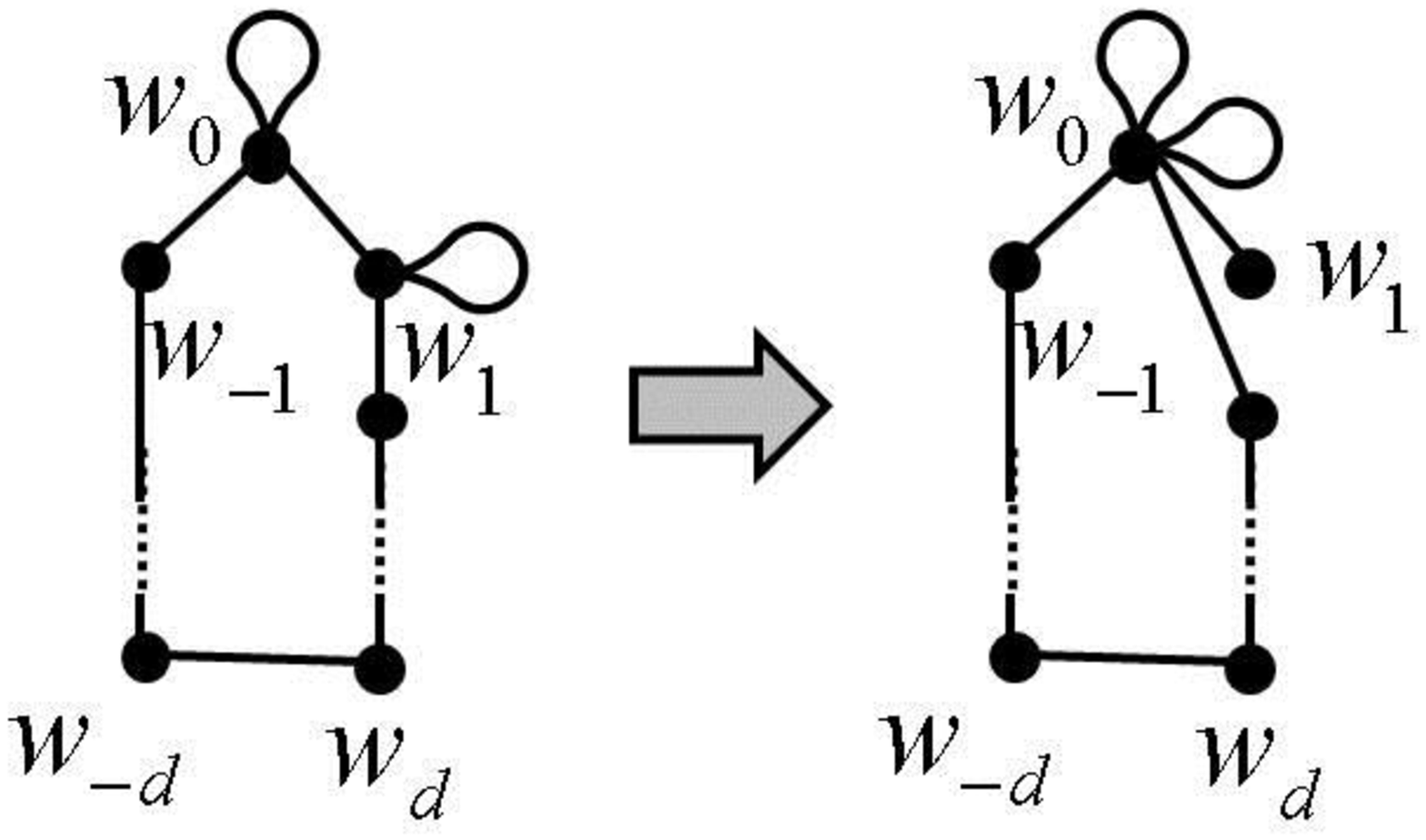}}}%
$%
\end{tabular}
\end{center}
\caption{Graph transformation from the proof of Lemma
\ref{lema1_3decreasingCycle}: a) even $c,$ b) odd $c.$}%
\label{Figure06}%
\end{figure}

\begin{lemma}
\label{lema1_3decreasingCycle}Let $G\in\mathcal{U}(n)$ be a cycle-star with
the length of the only cycle being $c\geq4.$ Then there is a cycle-star
$G^{\prime}\in\mathcal{U}(n)$ with the length of the only cycle being $3$ for
which $H_{A}(G)<H_{A}(G^{\prime}).$
\end{lemma}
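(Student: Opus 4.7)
My plan is to argue by iteration: show that any cycle-star $G$ with cycle length $c\geq 4$ admits a cycle-star $\widetilde G$ on the same $n$ vertices with cycle length $c-1$ and $H_A(G)<H_A(\widetilde G)$; then $c-3$ applications yield the desired $G'$. For the single step, indicated by the two subfigures of Figure \ref{Figure06}, I would fix the cycle $C=w_0w_1\cdots w_{c-1}$, pick a reference vertex $w_0$, let $w_m$ be the cycle vertex farthest from $w_0$ (so $m=c/2$ for even $c$ and $m=(c\pm 1)/2$ for odd $c$), and form $\widetilde G$ by deleting the cycle edges $w_{m-1}w_m,w_m w_{m+1}$, inserting the shortcut $w_{m-1}w_{m+1}$, and re-attaching $w_m$ together with every leaf of $w_m$ in $G$ as new leaves of $w_0$. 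The result is a cycle-star on $n$ vertices with cycle length $c-1$, and the parity of $c$ only affects the indexing in the symmetry argument below.

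To establish $H_A(\widetilde G)>H_A(G)$ I would decompose $\sum\Delta(u,v)$ into three classes: pairs both outside the transplanted set $T=\{w_m\}\cup L(w_m)$, pairs both inside $T$, and mixed pairs. The first class contributes non-negatively and often strictly positively, since the cycle is now shorter so distances among remaining vertices weakly decrease, and moreover $\delta(w_0)$ has grown by $|T|$, producing extra positive mass on every pair involving $w_0$. The second class is untouched as a subtree, so $\Delta=0$. The main obstacle is the mixed class.

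For a transplanted vertex $x\in T$ and a remaining cycle vertex $w_i\ne w_0$, the old distance $d_G(x,w_i)$ is governed by $d_G(w_m,w_i)$ while the new $d_{\widetilde G}(x,w_i)$ is governed by $d_{\widetilde G}(w_0,w_i)$, so the pair contribution changes sign around $i\approx m/2$: positive near $w_0$, negative near $w_m$. The strategy is to pair each such $w_i$ with its mirror $w_{c-i}$ across $w_0$, use reflection symmetry of the old and new cycles to equate both old distances and both new distances, and reduce to an elementary comparison of reciprocals of integers; leaf contributions at each $w_i$ attach to its cycle vertex and are absorbed in the same way. Pairs $(w_0,x)$ with $x\in T$ contribute strictly positively, since the distance collapses from $m$ (or $m+1$) to $1$ while $\delta(w_0)$ grows; this surplus together with the first-class surplus is what compensates for the one unpaired cycle vertex that arises when $c$ is odd. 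The final arithmetic inequality reduces to a harmonic-number estimate in the spirit of Lemma \ref{lema0_pomocnaHarmonic}, and iterating the single step $c-3$ times produces the required cycle-star $G'$ of cycle length $3$.
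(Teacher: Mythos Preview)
Your single-step transformation is \emph{not} the one the paper uses, and as stated it has a genuine gap. The paper does not move the antipodal vertex $w_m$; it performs a local ``edge contraction'': keeping the edge $w_0w_1$, it reroutes every other edge incident to $w_1$ (the next cycle edge $w_1w_2$ and all leaves of $w_1$) to $w_0$, so that $w_1$ becomes a leaf and the cycle shrinks by one. The whole analysis then reduces to pairing $\Delta(w_1,v)$ with $\Delta(w_0,v)$, which is clean because for $v\in V^+$ the distance to $w_1$ goes up by exactly $1$ while the distance to $w_0$ is unchanged, and for $v\in V^-$ the opposite happens. The only residual difficulty is the set $V_{-d}$ when $c$ is odd, and there the paper uses two ingredients you do not have: a \emph{specific choice} of labeling so that $w_{-d}$ has minimum degree on $C$, and the separate treatment of $G=C_n$ via Lemma~\ref{lema1_1iznimkaCnOdd}.

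Your antipodal transplant runs into trouble in at least two places. First, your ``second class'' claim is false: if $w_m$ has leaves $x_{m,j}$, then after the move $w_m$ and each $x_{m,j}$ are both leaves of $w_0$, so $d_{\widetilde G}(w_m,x_{m,j})=2$ while $d_G(w_m,x_{m,j})=1$, and $\delta(w_m)$ drops from $k_m$ to $1$; hence $\Delta(w_m,x_{m,j})=\tfrac{1+1}{2}-\tfrac{k_m+1}{1}=-k_m$, which is a large negative mass you have not accounted for. (If instead you keep the edges $w_mx_{m,j}$, then $\widetilde G$ is no longer a cycle-star and your iteration scheme fails.) Second, the mixed-class reflection $w_i\leftrightarrow w_{c-i}$ equates distances but not degrees: $k_i\neq k_{c-i}$ in general, and the numbers of leaves hanging at the two mirror vertices need not match, so the paired contributions do not cancel. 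You never specify how $w_0$ is chosen, so you cannot force $w_m$ to be leafless or force any degree symmetry. The appeal to ``a harmonic-number estimate in the spirit of Lemma~\ref{lema0_pomocnaHarmonic}'' is not a substitute for the missing inequality; in the paper that lemma is used only to handle $C_n$ directly, not to rescue the general step.
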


\begin{proof}
Let $C$ be the only cycle in $G$ and let $c$ be the length of $C.$ Let
$d=\left\lfloor c/2\right\rfloor .$ In the case of even $c$ we will denote
vertices on $C$ by $C=w_{-(d-1)}\ldots w_{d},$ in the case of odd $c$ we will
denote $C=w_{-d}\ldots w_{d}.$ Without loss of generality we may assume that
$w_{-d}$ (if it exists, since it exists only for odd $c$) is of minimum degree
among vertices on $C$. It is convenient to introduce the notation
$k_{i}=\delta_{G}(w_{i}).$ Now, leafs neighboring to $w_{i}$ wil be denoted by
$x_{i,j}$ ($j=1,\ldots,k_{i}-2$) and we define $V_{i}=\{w_{i},x_{i,j}%
:j=1,\ldots,k_{i}-2\}.$ Now, let $V^{+}=\left(  V_{1}\cup\ldots\cup
V_{d}\right)  \backslash\{w_{1}\}$ and $V^{-}=\left(  V_{0}\cup\ldots\cup
V_{-(d-1)}\right)  \backslash\{w_{0}\}.$ Obviously, in the case of even $c$ it
holds that $V=\{w_{0},w_{1}\}\cup V^{+}\cup V^{-},$ while in the case of odd
$c$ it holds that $V=\{w_{0},w_{1}\}\cup V^{+}\cup V^{-}\cup V_{-d}.$

Now, let $G^{\prime}$ be a graph obtained from $G$ by deleting all edges
$vw_{1}$ incident to $w_{1},$ except $w_{0}w_{1},$ and add the edge $vw_{0}$
instead. This transformation is illustrated in Figure \ref{Figure06}.
Obviously, $G^{\prime}$ is a cycle-star graph in which the only cycle is of
the length $c-1.$ We have to prove that $H_{A}(G)<H_{A}(G^{\prime}).$ Note
that in this transformation the only pairs $u,v$ whose distance increases are
pairs $u=w_{1}$ and $v\in V^{+}\cup V_{-d}$ (recall that $V_{-d}$ only exists
for odd $c$). Also, the only vertex for which degree decreases is $w_{1}.$
Therefore, we have to consider all contributions $\Delta(w_{1},v)$ where $v\in
G.$

If $v=w_{0}$ we have $\Delta(w_{1},w_{0})=0,$ else if $v\in V^{+}$ we have
\[
\Delta(w_{1},v)+\Delta(w_{0},v)=\frac{k_{0}-1}{d_{G}(w_{1},v)}-\frac{k_{0}%
-1}{d_{G}(w_{1},v)+1}>0,
\]
else if $v\in V^{-}$ we have
\[
\Delta(w_{1},v)+\Delta(w_{0},v)=\frac{k_{1}-1}{d_{G}(w_{1},v)-1}-\frac
{k_{1}-1}{d_{G}(w_{1},v)}>0.
\]
Note that in the case of even $c$ this completes the proof. In the case of odd
$c$ we still have to consider contributions $\Delta(w_{1},v)$ where $v\in
V_{-d}.$ First, note that for $G=C_{n}$ the claim of this lemma follows from
Lemma \ref{lema1_1iznimkaCnOdd}. So, we will suppose $G\not =C_{n}.$ Now, for
negative contributions $\Delta(w_{1},v)$ where $v\in V_{-d}$, positive
contribution $\Delta(w_{0},v)$ will not suffice to compensate, so we will have
to find more pairs with positive contribution for compensation. To prove this
more formally, we will distinguish cases $v=w_{-d}\in V_{-d}$ and
$v=x_{-d,j}\in V_{-d}.$

If $v=w_{-d}\in V_{-d}$ we have
\[
\Delta(w_{1},w_{-d})+\Delta(w_{0},w_{-d})=\frac{1+k_{-d}}{d+1}-\frac{1+k_{-d}%
}{d}<0.
\]
Therefore, we have to find more pairs with positive contributions to
compensate. Since $G\not =C_{n}$ there has to be at least one leaf $x_{i,1}$
in $G.$ Without loss of generality we can assume $i\leq0$. Then for $x_{i,1} $
and $w_{j}$ where $j=d-i$, recalling that $w_{-d}$ is of minimum degree on $C$
(i.e. $k_{j}\geq k_{-d}$)$,$ we have
\[
\Delta(x_{i,1},w_{j})=\frac{1+k_{j}}{d}-\frac{1+k_{j}}{d+1}\geq\lbrack
k_{j}\geq k_{-d}]\geq\frac{1+k_{-d}}{d}-\frac{1+k_{-d}}{d+1}.
\]
So, obviously $\Delta(w_{1},w_{-d})+\Delta(w_{0},w_{-d})+\Delta(x_{i,1}%
,w_{j})\geq0.$

Finally, if $v=x_{-d,j}\in V_{-d}$ we have
\[
\Delta(w_{1},x_{-d,i})+\Delta(w_{0},x_{-d,i})=\frac{2}{d+2}-\frac{2}{d+1}<0.
\]
Again, it follows that we have to find more pairs with positive contribution
to compensate. Since $w_{-d}$ is of minimum degree on $C,$ it follows that
every $w_{i}$ has at least as many leafs as $w_{-d}.$ Let us now consider pair
of leafs $x_{0,1}$ and $x_{d,i}.$ We have
\[
\Delta(x_{0,1},x_{d,i})=\frac{2}{d+1}-\frac{2}{d+2}>0.
\]
Obviously, we have $\Delta(w_{1},x_{-d,i})+\Delta(w_{0},x_{-d,i}%
)+\Delta(x_{0,1},x_{d,i})=0$ which completes the proof.
\end{proof}

\begin{figure}[h]
\begin{center}
\includegraphics[scale=0.2]{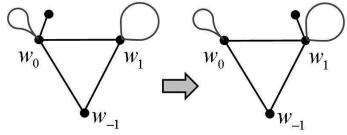}
\end{center}
\caption{Graph transformation from the proof of Lemma
\ref{lema1_4leafsOfTriangle}.}%
\label{Figure08}%
\end{figure}

\begin{lemma}
\label{lema1_4leafsOfTriangle}Let $G\in\mathcal{U}(n),$ where $G\not =%
CS_{3,n-3}$, be a cycle-star graph with the length of the only cycle being
$c=3.$ Then $H_{A}(G)<H_{A}(CS_{3,n-3}).$
\end{lemma}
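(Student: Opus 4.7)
The plan is to continue the transformation strategy of the preceding lemmas. Since $c = 3$, the cycle of $G$ is a triangle $w_0 w_1 w_2$, and let $k_i = \delta_G(w_i) - 2$ denote the number of leaves at $w_i$, so $k_0 + k_1 + k_2 = n - 3$. The hypothesis $G \neq CS_{3,n-3}$ means at least two of these counts are positive, and I may relabel so that $k_0 \geq k_1 \geq 1$. The transformation (depicted in Figure \ref{Figure08}) picks a single leaf $x$ of $w_1$, deletes the edge $xw_1$ and inserts the edge $xw_0$; call the resulting triangular cycle-star $G^{\prime}$, with leaf distribution $(k_0+1,\, k_1-1,\, k_2)$. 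The claim to verify is $H_A(G^{\prime}) > H_A(G)$.

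Distances change only for pairs involving $x$: $d(x, w_0)\colon 2 \to 1$, $d(x, w_1)\colon 1 \to 2$, $d(x, y)\colon 3 \to 2$ for each original leaf $y$ of $w_0$, and $d(x, y)\colon 2 \to 3$ for each remaining leaf $y$ of $w_1$. The only degree changes are $\delta(w_0)$ up by one and $\delta(w_1)$ down by one. I would partition the nonzero contributions $\Delta(u,v)$ into four groups: (a) the pairs $\{x, w_0\}$ and $\{x, w_1\}$; (b) for each $v \notin \{x, w_0, w_1\}$, the pairs $\{w_0, v\}$ and $\{w_1, v\}$; (c) $\{x, y\}$ for each original leaf $y$ of $w_0$; and (d) $\{x, y\}$ for each remaining leaf $y$ of $w_1$. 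A routine calculation gives partial sums $\tfrac{k_0 - k_1 + 1}{2}$, $\tfrac{k_0 - k_1 + 1}{2}$, $\tfrac{k_0}{3}$, and $-\tfrac{k_1 - 1}{3}$, which total $\tfrac{4(k_0 - k_1 + 1)}{3} > 0$ by the assumption $k_0 \geq k_1$.

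Iteration then finishes the proof: the monovariant $\sum_i k_i^2$ strictly increases by $2(k_0 - k_1 + 1) \geq 2$ at each step and is bounded above by $(n - 3)^2$, so after finitely many applications $(k_0, k_1, k_2) = (n - 3, 0, 0)$ up to permutation, i.e., the graph is $CS_{3,n-3}$. The main obstacle I anticipate is the bookkeeping in the four groups above: unlike Lemma \ref{lema1_3decreasingCycle}, here both endpoints $w_0$ and $w_1$ of the moved edge change degree simultaneously, so positive and negative terms must be tracked across two cycle vertices at once, and the hypothesis $k_0 \geq k_1$ is precisely what makes the positive contributions dominate. As a sanity check (and an alternative route to the whole lemma), one can verify the closed form $H_A(G) = \tfrac{2}{3}\sum_i k_i^2 + C(n)$ for any triangular cycle-star, from which the lemma follows immediately because $\sum_i k_i^2$ subject to $\sum_i k_i = n - 3$ is uniquely maximised by concentrating all leaves at a single vertex.
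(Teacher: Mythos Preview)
Your main argument is correct and is essentially the paper's own proof: both move one leaf from a branching vertex of smaller degree on the triangle to one of larger degree, verify $H_A$ strictly increases by pairing off contributions, and iterate; your grouping (a)--(d) is just a slightly tidier bookkeeping of the same $\Delta$-terms the paper handles. Your parenthetical closed-form observation $H_A(G)=\tfrac{2}{3}\sum_i k_i^2 + C(n)$ is a genuinely different and cleaner route that the paper does not take---it collapses the whole lemma to the trivial fact that $\sum k_i^2$ under $\sum k_i = n-3$ is maximised at a single nonzero entry---so if you want to shorten the write-up you could lead with that instead.
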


\begin{proof}
Let $C=w_{-1}w_{0}w_{1}$ be a cycle in $G$ and let $k_{i}=\delta_{G}(w_{i}). $
Let $x_{i,j}$ be all leafs attached to $w_{i}$ and let $V_{i}=\{w_{i}%
,x_{i,j}:j=1,\ldots,k_{i}-2\}.$ Since $G\not =CS_{3,n-3},$ it follows that at
least two vertices on $C$ are branching. Without loss of generality we can
suppose that $w_{0}$ has minimum degree and $w_{1}$ maximum degree among
branching vertices on $C.$ Let $G^{\prime}$ be a graph obtained from $G$ by
deleting the edge $x_{0,1}w_{0}$ and adding the edge $x_{0,1}w_{1}$ instead.
We have to prove $H_{A}(G)<H_{A}(G^{\prime}).$

Note that in this transformation the only pairs $u,v$ whose distance increases
are pairs $u=x_{0,1}$ and $v\in V_{0}.$ The only vertex for which degree
decreases is $w_{0}.$ Let us first consider the problem with the increase in
distances. It is easily verified that $\Delta(x_{0,1},x_{0,j})+\Delta
(x_{0,1},x_{1,j})=0.$ Also, we have
\[
\Delta(x_{0,1},w_{0})+\Delta(x_{0,1},w_{1})=\frac{k_{1}+1-k_{0}}{1}%
-\frac{k_{1}+1-k_{0}}{2}>[k_{1}\geq k_{0}]>0.
\]
Let us now consider the problem with the increase in degree. It is easily
verified that for $v\in V_{-1}$ we have $\Delta(w_{0},v)+\Delta(w_{1},v)=0$
and also that $\Delta(w_{0},w_{1})=0.$ Further, we have
\begin{align*}
\Delta(w_{0},x_{0,j\geq2})+\Delta(w_{1},x_{0,j\geq2})  &  =\frac{k_{1}%
+1-k_{0}}{1}-\frac{k_{1}+1-k_{0}}{2}>[k_{1}\geq k_{0}]>0.\\
\Delta(w_{0},x_{1,j})+\Delta(w_{1},x_{1,j})  &  =\frac{3}{2}>0.
\end{align*}

\end{proof}

\begin{theorem}
\label{tm1_maximalGraph}Let $G\in\mathcal{U}(n).$ Then
\[
H_{A}(G)\leq\frac{3}{2}(n^{2}-n+2)
\]
with equality if and only if $G=CS_{3,n-3.}$
\end{theorem}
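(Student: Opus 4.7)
The plan is to assemble the reductions established in Lemmas \ref{lema1_2arbitraryToCyclestar}, \ref{lema1_3decreasingCycle} and \ref{lema1_4leafsOfTriangle} into a single chain that drives any $G\in\mathcal{U}(n)$ toward $CS_{3,n-3}$ while strictly increasing $H_{A}$ at every non-trivial step, and then to read off the numerical value from Lemma \ref{lema0_valueExtremal}.

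First I would set up three dispatch steps. Given $G\in\mathcal{U}(n)$: if $G$ is not a cycle-star graph, invoke Lemma \ref{lema1_2arbitraryToCyclestar} to obtain a cycle-star $G_{1}\in\mathcal{U}(n)$ with $H_{A}(G)<H_{A}(G_{1})$; otherwise set $G_{1}=G$. Next, if the only cycle of $G_{1}$ has length $c\geq 4$, apply Lemma \ref{lema1_3decreasingCycle} (whose proof in turn appeals to Lemma \ref{lema1_1iznimkaCnOdd} for the exceptional case $G_{1}=C_{n}$ with $n$ odd) to obtain a cycle-star $G_{2}\in\mathcal{U}(n)$ whose cycle has length $3$ and satisfies $H_{A}(G_{1})<H_{A}(G_{2})$; otherwise set $G_{2}=G_{1}$. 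Finally, if $G_{2}\neq CS_{3,n-3}$, Lemma \ref{lema1_4leafsOfTriangle} yields $H_{A}(G_{2})<H_{A}(CS_{3,n-3})$.

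Chaining these inequalities gives $H_{A}(G)\leq H_{A}(CS_{3,n-3})$, and Lemma \ref{lema0_valueExtremal} evaluates the right-hand side to $\tfrac{3}{2}(n^{2}-n+2)$. This is the stated upper bound.

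For the equality clause I would argue by contrapositive: suppose $G\neq CS_{3,n-3}$. Then at least one of the three branches above is non-trivial. Indeed, the chain $G\leadsto G_{1}\leadsto G_{2}=CS_{3,n-3}$ can end with $G_{2}=CS_{3,n-3}$ without any strict inequality only if the second and third reductions are the identity, which forces $G_{1}$ to already be $CS_{3,n-3}$, and then the first reduction is the identity as well, contradicting $G\neq CS_{3,n-3}$. Hence at least one step is strict, which yields $H_{A}(G)<H_{A}(CS_{3,n-3})$. I do not anticipate any genuine obstacle here beyond the bookkeeping of these three cases; the full technical work has already been carried out in the preceding lemmas.
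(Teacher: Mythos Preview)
Your proposal is correct and follows essentially the same approach as the paper: chain Lemmas \ref{lema1_2arbitraryToCyclestar}, \ref{lema1_3decreasingCycle}, and \ref{lema1_4leafsOfTriangle} (with Lemma \ref{lema1_1iznimkaCnOdd} handling the odd $C_{n}$ exception inside the second step) and then read off the bound from Lemma \ref{lema0_valueExtremal}. Your treatment of the equality case via contrapositive is slightly more explicit than the paper's, but the argument is the same.
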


\begin{proof}
Using Lemmas \ref{lema1_2arbitraryToCyclestar}, \ref{lema1_3decreasingCycle}
and \ref{lema1_4leafsOfTriangle} we first transform $G$ to a cycle-star graph,
then we reduce the length of the cycle to $3$, so that we can finally
transform it to $C_{3,n-3.}$ In each transformation the value of $H_{A}$
increases, so $G=C_{3,n-3.}$ is the only extremal unicyclic graph. Note that
the case of $G=C_{n}$ is covered by Lemma \ref{lema1_1iznimkaCnOdd} for $n$
odd, while for even $n$ it is covered by Lemma \ref{lema1_3decreasingCycle}.
Now the bound for $H_{A}$ follows from Lemma \ref{lema0_valueExtremal}.
\end{proof}

\section{Minimal unicyclic graphs}

As in previous section, we will find maximal graphs by introducing
transformation of graph $G$ to $G^{\prime},$ but which now \textbf{decrease}
the value of $H_{A}.$ Therefore, in this section $\Delta(u,v)$ will denote the
contribution of a pair of vertices $u,v\in G$ to the sum $H_{A}(G)-H_{A}%
(G^{\prime}).$ The problem will again be pairs of vertices with negative
contribution, for which we will have to find enough pairs with positive
contribution to compensate in the sum.

\begin{figure}[h]
\begin{center}
\includegraphics[scale=0.2]{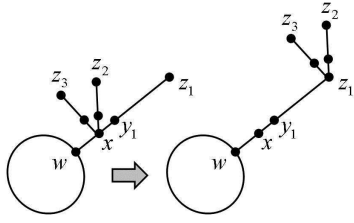}
\end{center}
\caption{Graph transformation from the proof of Lemma
\ref{lema2_1subtreesToPaths}.}%
\label{Figure09}%
\end{figure}

\begin{lemma}
\label{lema2_1subtreesToPaths}For every $G\in\mathcal{U}(n)$ which is not a
cycle-path graph, there is a cycle-path graph $G^{\prime}\in\mathcal{U}(n)$
such that $H_{A}(G)>H_{A}(G^{\prime}).$
\end{lemma}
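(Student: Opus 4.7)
The plan is to exhibit a local graph transformation $G \mapsto G'$ that strictly decreases $H_A$ and moves $G$ closer to a cycle-path graph; iterating finitely many times will produce the desired $G'$. Since $G$ is not a cycle-path graph, some tree hanging off $C$ fails to be a path, and hence, by finiteness, there exists a vertex $y$ that has at least two pendant paths attached to it and is \emph{deepest} with this property: every subtree hanging from $y$ on the non-cycle side is itself a path.

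Let $P_1 = y\text{-}a_1\text{-}\cdots\text{-}a_p$ be the longest such pendant path at $y$ and let $P_2 = y\text{-}b_1\text{-}\cdots\text{-}b_q$ be another, with $q \le p$. I define $G'$ by deleting the edge $yb_1$ and adding the edge $a_p b_1$, thereby appending $P_2$ to the end of $P_1$. Then the only degree changes are $\delta_{G'}(y) = \delta_G(y) - 1$ and $\delta_{G'}(a_p) = 2$, and the only distance changes involve an endpoint in $\{b_1,\ldots,b_q\}$: one has $d_{G'}(b_i, v) = d_G(b_i, v) + p$ whenever $v \in V(G) \setminus (P_1 \cup P_2)$ (including $v = y$), while $d_{G'}(b_i, a_j) = i + p - j$ replaces $d_G(b_i, a_j) = i + j$.

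I plan to evaluate $H_A(G) - H_A(G') = \sum_{u \neq v} \Delta(u,v)$ by grouping pairs. The crucial step is a telescoping cancellation via $j \leftrightarrow p - j$: for each fixed $i$,
\[
\sum_{j=1}^{p-1} \Delta(b_i, a_j) \;=\; (\delta(b_i) + 2) \sum_{j=1}^{p-1} \Bigl[\frac{1}{i+j} - \frac{1}{i+p-j}\Bigr] \;=\; 0,
\]
and, analogously, $\sum_{j=1}^{p-1}[\Delta(y, a_j) + \Delta(a_p, a_j)] = 0$, while $\Delta(y, a_p) = 0$. The remaining pairs group into three manifestly positive sums: (i) $\Delta(y, v) + \Delta(a_p, v) = 1/d(y,v) - 1/(d(y,v)+p) > 0$ for $v \in V(G) \setminus (P_1 \cup P_2 \cup \{y\})$; (ii) $\Delta(b_i, v) > 0$ for the same $v$, coming from the uniform distance increase by $p$; and (iii) $\Delta(b_i, y) + \Delta(b_i, a_p) = (\delta_G(y) - 2)\cdot p/[i(p+i)] > 0$, using that $\delta_G(y) \ge 3$ (since $y$ carries at least two pendant paths together with either a parent toward the cycle or two cycle neighbors).

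The main obstacle is precisely the telescoping identity displayed above: individual terms $\Delta(b_i, a_j)$ with $j > p/2$ are genuinely negative and would otherwise swamp the estimate, and a similar sign change occurs for $(y,a_j) + (a_p,a_j)$. Once the pairwise cancellation is noted, the positivity of the three remaining groups is elementary. Each application of the transformation reduces the number of pendant paths at $y$ (and once $y$ has only one, the number of branching vertices of $G - E(C)$ drops by one), so finitely many iterations yield a cycle-path graph $G^{\ast} \in \mathcal{U}(n)$ with $H_A(G) > H_A(G^{\ast})$.
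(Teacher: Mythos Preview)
Your argument is correct and is essentially the paper's own proof: the paper also chooses the furthest branching vertex $x$ in a hanging tree, appends the shorter pendant paths to the end of the longest one $P_1$, and handles the potentially negative contributions by the same path involution $v\mapsto v'$ on $P_1$ (your substitution $j\leftrightarrow p-j$) together with the pairing of $x$ with $z_1$ (your $y$ with $a_p$). The only cosmetic difference is that the paper relocates all $\delta_G(x)-2$ extra pendant paths in one step while you move a single $P_2$ at a time; either way the iteration terminates in finitely many steps.
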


\begin{proof}
Let $w$ be a branching vertex on $C,$ let $T$ be the connected component of
$G\backslash C$ containing $w,$ let $x\in T$ be a branching vertex in $T$
furthest from $w.$ Let $P_{i}=xy_{i}\ldots z_{i}$ be all paths in $T$ starting
in $x$ such that $d(w,x)<d(w,y_{i})$ for every $i.$ Since $x$ is the branching
vertex furthest from $w,$ these paths are obviously vertex disjoint except for
the vertex $x.$ Without loss of generality we may assume that $P_{1}$ is the
longest among these paths. Let $G^{\prime}$ be a graph obtained from $G$ by
deleting edges $xy_{i}$ for $i=2,\ldots,\delta_{G}(x)-1,$ and adding edges
$z_{1}y_{i}$ instead. This transformation is illustrated in Figure
\ref{Figure09}. Note that $G^{\prime}$ is a unicyclic graph. We have to prove
that $H_{A}(G)>H_{A}(G^{\prime}).$ Since the value of the index $H_{A}$ has to
decrease, the problem are the pairs of vertices whose distance decreases or
vertices whose degree increases. Distances possibly decrease only for pairs
$u,v$ where $u\in P_{i}$ ($i=2,\ldots,\delta_{G}(x)-1$) and $v\in P_{1}.$ The
only vertex for which the degree increases in this transformation is $z_{1}.$

Let us first consider the problem with distances. Let $v\mapsto v^{\prime}$ be
the automorphism of the path $P_{1}$ such that $x^{\prime}=z_{1}.$ For $u\in
P_{i}$ we have
\[
\Delta(u,x)+\Delta(u,z_{1})=\frac{1}{d_{G}(u,x)}-\frac{1}{d_{G}%
(u,x)+\left\vert P_{1}\right\vert }>0.
\]
For $u\in P_{i}$ and $v\in P_{1}\backslash\{x,z_{1}\},$ if $v=v^{\prime}$ then
$d_{G}(u,v)=d_{G^{\prime}}(u,v)$ so $\Delta(u,v)=0,$else if $v\not =v^{\prime
}$ then we have $d_{G}(x,v)=d_{G}(z,v^{\prime})$ and $d_{G}(z,v)=d_{G}%
(x,v^{\prime})$ so it is easily verified that $\Delta(u,v)+\Delta(u,v^{\prime
})=0.$

Let us now consider the problem with degree. For $v\in P_{i}$ we have already
considered and compensated negative contributions $\Delta(z_{1},v).$ Let again
$v\mapsto v^{\prime}$ be the automorphism of path $P_{1}$ such that
$x^{\prime}=z_{1}.$ If we now consider $v\in P_{1},$ then for $v=x$ we have
\[
\Delta(z_{1},x)=\frac{1+\delta_{G}(x)}{d_{G}(z_{1},x)}-\frac{1+\delta
_{G}(x)-2+2}{d_{G}(z_{1},x)}=0,
\]
while for $v\in P_{1}\backslash\{z_{1},x\}$, we have
\[
\Delta(z_{1},v)+\Delta(x,v^{\prime})=\frac{\delta_{G}(x)-2}{d_{G}(x,v^{\prime
})}-\frac{\delta_{G}(x)-2}{d_{G}(z_{1},v)}=\left[  d_{G}(z_{1},v)=d_{G}%
(x,v^{\prime})\right]  =0.
\]
Finally, we have to consider $v\in G\backslash(\cup_{i}P_{i}).$ We have
\[
\Delta(z_{1},v)+\Delta(x,v)=\frac{\delta_{G}(x)-2}{d_{G}(x,v)}-\frac
{\delta_{G}(x)-2}{d_{G}(z_{1},v)}>\left[  d_{G}(x,v)<d_{G}(z_{1},v)\right]
>0.
\]
Therefore, we have proved that $H_{A}(G)>H_{A}(G^{\prime}).$ If $G^{\prime}$
is a cycle-path graph, then the proof is completed. If not, then by repeating
this transformation finitely many times we obtain a cycle path graph
$G^{\prime}$ for which $H_{A}(G)>H_{A}(G^{\prime})$, so the proof is complete.
\end{proof}

\begin{figure}[h]
\begin{center}%
\begin{tabular}
[c]{llll}%
a) & $\text{\raisebox{-1\height}{\includegraphics[scale=0.2]{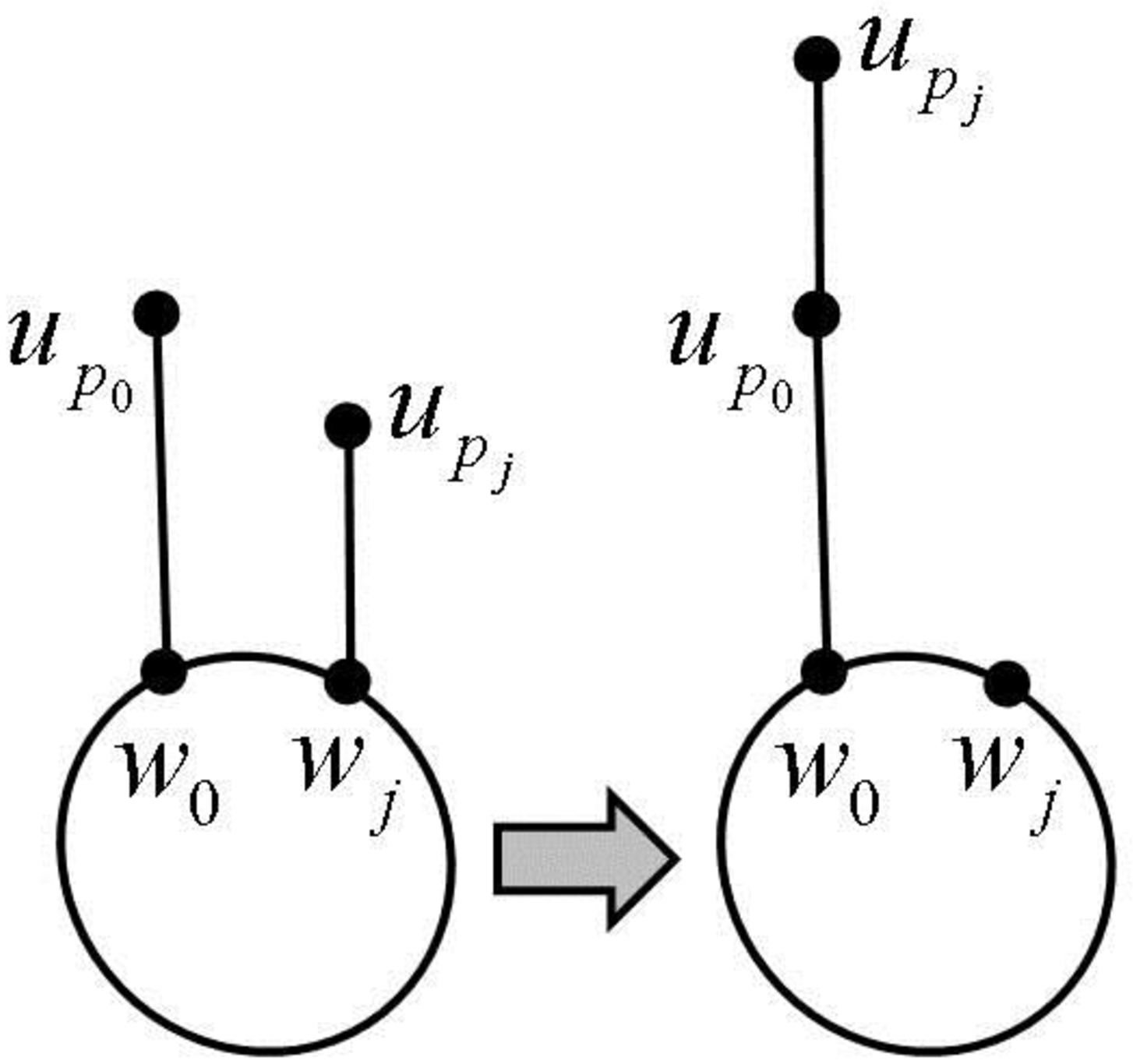}}}$
& b) & $\text{\raisebox{-1\height}{\includegraphics[scale=0.2]{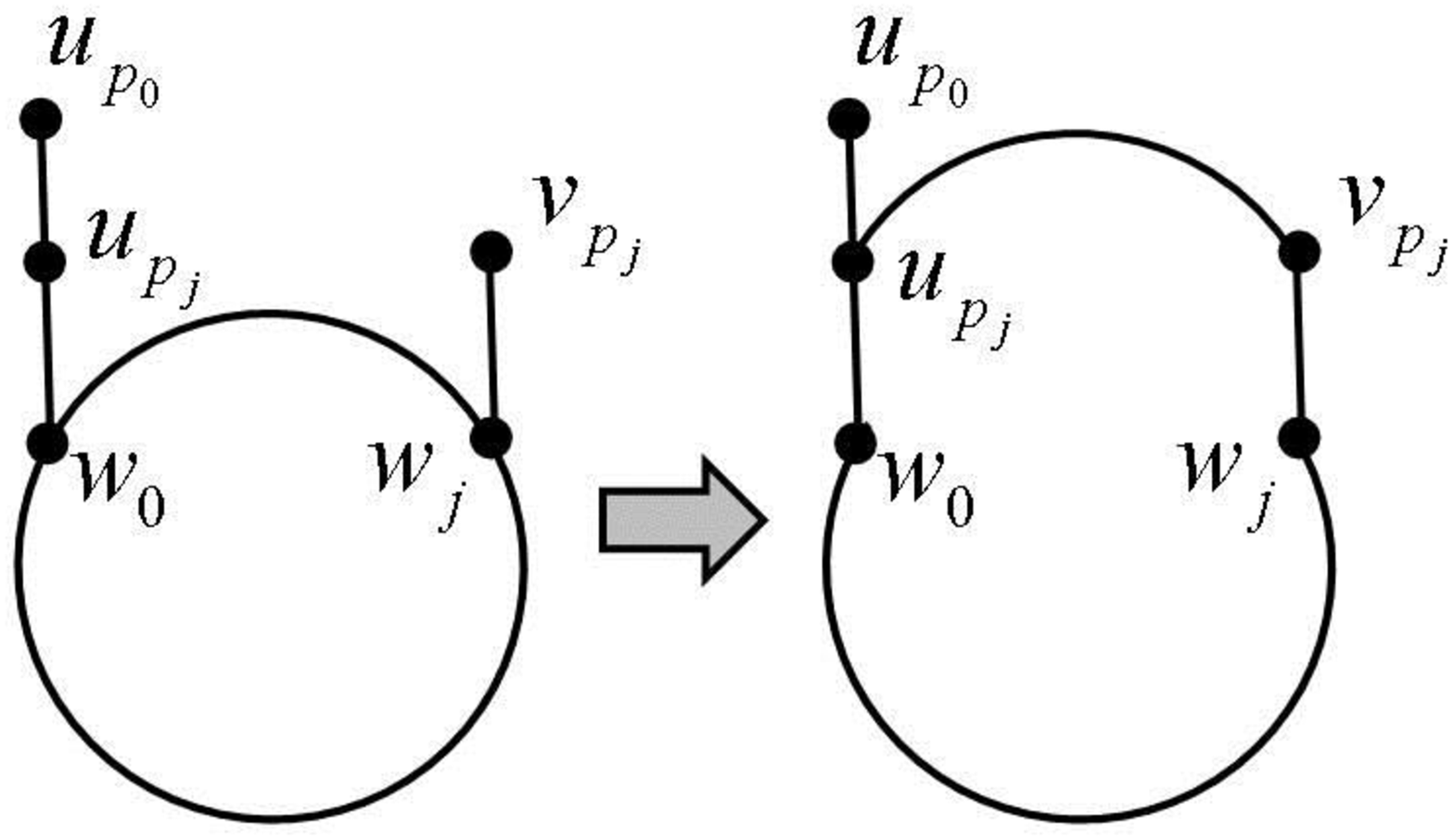}}}%
$%
\end{tabular}
\end{center}
\caption{Graph transformation from the proof of Lemma
\ref{lema2_2decreasingBranches}: a) case 1, b) case 2a.}%
\label{Figure10}%
\end{figure}

\begin{lemma}
\label{lema2_2decreasingBranches}Let $G\in\mathcal{U}(n)$ be a cycle-path
graph with at least $2$ branching vertices. Then there a cycle-path graph
$G^{\prime}\in\mathcal{U}(n)$ with only one branching vertex such that
$H_{A}(G)>H_{A}(G^{\prime}).$
\end{lemma}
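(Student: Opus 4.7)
The plan is to reduce the number of branching vertices by one at a time. Let $u$ and $v$ be two branching vertices lying on $C$ and let $P_u=uu_{1}\ldots u_{p}$ and $P_v=vv_{1}\ldots v_{q}$ be the attached paths; by relabeling $u$ and $v$ we may assume $p\geq q\geq 1$. Define $G'$ by deleting the edge $vv_{1}$ and adding the edge $u_{p}v_{1}$, so that the entire path $v_{1}\ldots v_{q}$ is grafted onto the end of $P_u$. Then $\delta_{G'}(v)=2$ (so $v$ is no longer branching) and $\delta_{G'}(u_{p})=2$, and $G'$ is still a cycle-path graph with exactly one fewer branching vertex than $G$. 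Once I prove $H_{A}(G)>H_{A}(G')$, iterating finitely many times yields a cycle-path graph with a single branching vertex.

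The core of the proof is a pair-by-pair analysis of $\Delta(x,y)$ contributing to $H_{A}(G)-H_{A}(G')$. The only vertex whose degree strictly increases is $u_{p}$ (from $1$ to $2$), and the only vertex whose degree strictly decreases is $v$. Distances split into three regimes: pairs inside $\{v_{1},\ldots,v_{q}\}$ are unchanged; pairs $(v_{i},w)$ with $w$ lying on the cycle arc containing $u$ together with $P_u$ get shorter, because $v_{i}$ now reaches $w$ through $u_{p}$; and pairs $(v_{i},w)$ with $w$ on the other arc get longer. Following the scheme of the previous lemmas, I intend to compensate each negative $\Delta(x,y)$ against a positive $\Delta(x',y')$ involving a carefully chosen partner vertex on the cycle or in $P_u$.

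The case split suggested by Figure \ref{Figure10} is driven by the relative positions of $u$ and $v$ on $C$: Case 1 handles the situation when the two $u$-to-$v$ arcs along $C$ have equal length, where a symmetry across the midpoint of $C$ matches each $v_i$ with a mirror vertex whose distance-decrease cancels the $v_i$-increase exactly; Case 2 handles unequal arcs, and likely splits further (subcase 2a in the figure) according to which arc a given $v_i$ routes through after the transformation. The main obstacle is Case 2: the cheap reflection argument used in Lemma \ref{lema2_1subtreesToPaths} and in Case 1 no longer balances, so I expect to exploit the hypothesis $p\geq q$ by pairing each problematic $v_i$ with a vertex in the extended path $u_1\ldots u_p$ whose index mirrors $i$, thereby converting the imbalance on $C$ into a surplus along $P_u$. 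Verifying that the bookkeeping accounts for every vertex exactly once, and that the remaining contributions $\Delta(u_{p},w)$ and $\Delta(v,w)$ combine positively, will be the only delicate step.
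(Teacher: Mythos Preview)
Your proposal has a genuine structural gap: you commit to a single transformation (graft $P_v$ onto the tip of $P_u$) and a case split based on whether the two $u$--$v$ arcs on $C$ have equal length. Neither matches the actual obstruction. The paper's dichotomy is not arc-symmetry but whether some pair of \emph{consecutive} branching vertices $w_i,w_j$ satisfies $d(w_i,w_j)\le p_i$ or $d(w_i,w_j)\le p_j$; when this holds (Case~1) your grafting move works, because the image of $w_0$ under the natural path-isomorphism lands inside $P_0$, which is what makes the compensation go through. When it fails (Case~2), the paper further splits by the number of branching vertices, and with $\ge 4$ branches (Subcase~2a) it abandons grafting entirely and uses a \emph{different} transformation: it picks the closest pair $w_0,w_j$ (so $j\le\lfloor c/4\rfloor$), deletes the two cycle edges $w_0w_1$ and $w_{j-1}w_j$, and reinserts the paths $P_0,P_j$ into the cycle via $u_{p_j}w_1$ and $w_{j-1}v_{p_j}$. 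Figure~\ref{Figure10} is showing these two distinct operations, not one operation in two configurations.

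The reason your single move is hard to push through in the $\ge 4$-branch, short-paths regime is that after grafting, every $v_a$ sits at distance $a+p+d_G(u,w)$ from a cycle vertex $w$, and when $p<d_G(u,v)$ this can be \emph{smaller} than the old distance $a+d_G(v,w)$ for $w$ near $u$; simultaneously the distances from $v_a$ to vertices on the \emph{other} appended paths $P_k,P_l,\ldots$ may also drop, and there is no third branch playing the role that $w_k$ and $P_k$ play in the paper's Subcase~2b to absorb these decreases. Your ``mirror $v_i$ against $u_i$'' idea does not supply enough positive $\Delta$'s here, and you have not specified which pair $u,v$ to pick, whereas the paper's arguments rely essentially on choosing consecutive (Case~1) or closest (Subcase~2a) branches. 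You should either find a direct counterexample to the universal grafting claim or, more realistically, adopt the paper's two-transformation scheme and its case split on $d(w_i,w_j)$ versus $p_i,p_j$ and on the number of branching vertices.
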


\begin{proof}
Let $C$ be a cycle in $G$ with vertices denoted by $w_{i}.$ Let $P_{i}$ be a
path appended to a branching vertex $w_{i}\in C$ (here $P_{i}$ includes
$w_{i}$) and $p_{i}=\left\vert P_{i}\right\vert .$ We distinguish several cases.

CASE 1. There are two consecutive branching vertices $w_{i},w_{j}$ on $C$ such
that $d(w_{i},w_{j})\leq p_{i}$ or $d(w_{i},w_{j})\leq p_{j}.$ Without loss of
generality we may assume that $0=i<j\leq\left\lfloor \frac{c}{2}\right\rfloor
$, $d(w_{0},w_{j})\leq p_{0}$ and $p_{0}\geq p_{j}.$ Let $P_{0}=w_{0}%
u_{1}\ldots u_{p_{0}}$ and $P_{j}=w_{j}v_{1}\ldots v_{p_{j}}.$ Let $G^{\prime
}$ be a graph obtained from $G$ by deleting the edge $w_{j}v_{1}$ and adding
the edge $u_{p_{0}}v_{1}.$ This transformation is illustrated in Figure
\ref{Figure10} a). Note that $G^{\prime}$ is cycle-path graph with one branch
less than $G.$ We have to prove $H_{A}(G)>H_{A}(G^{\prime}).$ Let $P_{A}$ be
the shortest path in $G$ connecting vertices $u_{p_{0}}$ and $v_{p_{j}}.$ Note
that distances possibly decrease only for pairs of vertices $u,v\in P_{A}.$
The only vertex for which the degree increases is $u_{p_{0}}.$

Let us first consider the problem with distances. Let $P_{A}^{\prime}$ be a
path in $G^{\prime}$ connecting vertices $v_{p_{j}}$ and $w_{j}.$ Let
$v\mapsto v^{\prime}$ be an isomorphism of paths $P_{A}$ and $P_{A}^{\prime} $
such that $v_{p_{j}}^{\prime}=v_{p_{j}}.$ Note that $u_{p_{0}}^{\prime}=w_{j}$
(and vice versa) and also $w_{0}^{\prime}\in P_{0}$ since $d(w_{0},w_{j})\leq
p_{0}.$ Now, we first consider $u=v_{a}\in P_{j}.$ If $v=v_{b}\in P_{j}$ we
have $\Delta(v_{a},v_{b})=0,$ else if $v=w_{j}$ and $v^{\prime}=u_{p_{0}}$ we
have
\[
\Delta(v_{a},w_{j})+\Delta(v_{a},u_{p_{0}})=\frac{1}{d_{G}(v_{a},w_{j})}%
-\frac{1}{d_{G}(v_{a},w_{j})+p_{0}+j}>0,
\]
else if $v=v^{\prime}$ from $d_{G^{\prime}}(v_{a},v)=d_{G}(v_{a},v)$ we have
$\Delta(v_{a},v)=0,$ else if $v=w_{0}$ and $v^{\prime}=w_{0}^{\prime} $ we
have
\[
\Delta(v_{a},w_{0})+\Delta(v_{a},w_{0}^{\prime})=\frac{1}{d_{G}(v_{a}%
,w_{j})+j}-\frac{1}{d_{G}(v_{a},w_{j})+p_{0}}\geq\lbrack j\leq p_{0}]\geq0.
\]
else if $v\not =v^{\prime}$ from $d_{G}(w_{j},v^{\prime})=d_{G}(u_{p_{0}},v) $
and $d_{G}(u_{p_{0}},v^{\prime})=d_{G}(w_{j},v)$ we have $\Delta
(v_{a},v)+\Delta(v_{a},v^{\prime})=0.$ Therefore we have covered all pairs
$u,v\in P_{A}$ for which $u\in P_{j}.$ Let now $u=u_{p_{0}}.$ We have already
considered pairs with $v\in P_{j}\backslash\{w_{j}\}$ in previous text. If
$v=w_{j}$ we have $\Delta(u_{p_{0}},w_{j})=0,$ else
\[
\Delta(u_{p_{0}},v)+\Delta(w_{j},v^{\prime})=\frac{1}{d_{G}(w_{j},v^{\prime}%
)}-\frac{1}{d_{G}(u_{p_{0}},v)}=[d_{G}(w_{j},v^{\prime})=d_{G}(u_{p_{0}%
},v)]=0
\]
Finally, for $u,v\in P_{A}\backslash(P_{j}\cup\{u_{p_{0}},w_{j}\})$ both the
degrees and the distances are not changed in the transformation, so we have
$\Delta(u,v)=0.$

Therefore, we have considered and compensated all negative contributions due
to decrease in distances. Let us now consider all negative contributions due
to increase in degree of $u_{p_{0}}.$ Note that we have already considered and
compensated pairs $u_{p_{0}},v$ where $v\in P_{A}$. Let now $v\in G\backslash
P_{A}.$ We have
\[
\Delta(u_{p_{0}},v)+\Delta(w_{j},v)=\frac{1}{d_{G}(w_{j},v)}-\frac{1}%
{d_{G}(u_{p_{0}},v)}=[d_{G}(w_{j},v)\leq d_{G}(u_{p_{0}},v)]\geq0.
\]

CASE 2. For every two consecutive branching vertices $w_{i},w_{j}$ on $C$ it
holds that $d(w_{i},w_{j})>p_{i}$ and $d(w_{i},w_{j})>p_{j}.$

SUBCASE 2a. There are at least $4$ branching vertices on $C.$

Let $w_{i}$ and $w_{j}$ be the pair of branching vertices on minimum distance.
Without loss of generality we may assume that $0=i<j\leq\left\lfloor \frac
{c}{4}\right\rfloor .$ Let $P_{0}=w_{0}u_{1}\ldots u_{p_{0}}$ and $P_{j}%
=w_{j}v_{1}\ldots v_{p_{j}}$ be paths appended to $w_{0} $ and $w_{j}.$ The
condition of Case 2 is now read as $j>p_{0}$ and $j>p_{j}. $ Without loss of
generality we may assume $p_{0}\geq p_{j}.$ Now, let $G^{\prime}$ be a graph
obtained from $G$ by deleting edges $w_{0}w_{1}$ and $w_{j-1}w_{j}$ and adding
edges $u_{p_{j}}w_{1}$ and $w_{j-1}v_{p_{j}}$ instead. This transformation is
illustrated in Figure \ref{Figure10} b).\ Graph $G^{\prime}$ is obviously
cycle-path graph with one branch less than $G.$ We have to prove
$H_{A}(G)>H_{A}(G^{\prime}).$ Let $P_{A}$ be the shortest path in $G$
connecting vertices $u_{p_{0}}$ and $v_{p_{j}}.$ Note that distances possibly
decrease only for pairs of vertices $u,v\in P_{A}.$ The only vertices for
which degree increases are $u_{p_{j}}$ and $v_{p_{j}}.$

Let us first consider the problem with distances. Let $P_{B}$ be the shortest
path in $G$ connecting vertices $u_{p_{j}}$ and $v_{p_{j}}$ ($P_{B}$ is
subpath of $P_{A}$). Let $P_{B}^{\prime}$ be the shortest path in $G^{\prime}$
connecting vertices $w_{0}$ and $w_{j}.$ It is important to note that because
of the condition of subcase ($2p_{j}+j<3\left\lfloor \frac{c}{4}\right\rfloor
$) shortest path between $w_{0}$ and $w_{j}$ both before and after the
transformation goes through the same side of cycle, i.e. paths $P_{B}$ and
$P_{B}^{\prime}$ are of the same length and contain the same vertices (though
not in the same order). Let $v\mapsto v^{\prime}$ be the isomorphism of paths
$P_{B}$ and $P_{B}^{\prime}$ such that $u_{p_{j}}^{\prime}=w_{0}.$ We first
consider pairs $u,v\in P_{B}.$ For $u,v\in\{u_{p_{j}},v_{p_{j}},w_{0},w_{j}\}$
it is easily verified that $\Delta(u_{p_{j}},w_{j})=\Delta(v_{p_{j}}%
,w_{0})=\Delta(u_{p_{j}},w_{0})=\Delta(v_{p_{j}},w_{j})=0$ and%
\[
\Delta(u_{p_{j}},v_{p_{j}})+\Delta(w_{0},w_{j})=\frac{3-\delta_{G}(u_{p_{j}}%
)}{j}-\frac{3-\delta_{G}(u_{p_{j}})}{2p_{j}+j}>0.
\]
Now, let us consider pairs $u,v\in P_{B}$ where $u\in P_{0}.$ First, let
$u=u_{p_{j}}\in P_{0}.$ Then, for $b<p_{j}$ we have
\begin{align*}
\Delta(u_{p_{j}},u_{b})+\Delta(w_{0},u_{b}^{\prime})  &  =\frac{1}{d_{G}%
(w_{0},u_{b}^{\prime})}-\frac{1}{d_{G}(u_{p_{j}},u_{b})}=\left[
d_{G}(u_{p_{j}},u_{b})=d_{G}(w_{0},u_{b}^{\prime})\right]  =0,\\
\Delta(u_{p_{j}},v_{b})+\Delta(w_{0},v_{b}^{\prime})  &  =\frac{2-\delta
_{G}(u_{p_{j}})}{j+p_{j}-b}-\frac{2-\delta_{G}(u_{p_{j}})}{p_{j}+j+b}>0,
\end{align*}
and for $0<b<j$ we have
\[
\Delta(u_{p_{j}},w_{b})+\Delta(w_{0},w_{b})=\frac{2-\delta_{G}(u_{p_{j}})}%
{b}-\frac{2-\delta_{G}(u_{p_{j}})}{p_{j}+b}>0.
\]
Now, let $u=u_{a}\in P_{0}$ for $a<p_{j}.$ Then for $b<p_{j}$, if $u_{a}%
=u_{a}^{\prime}$ and $v_{b}=v_{b}^{\prime}$ (i.e. $a=b=\frac{p_{j}}{2} $) we
have $\Delta(u_{a},v_{b})=0,$ else we have $\Delta(u_{a},v_{b})+\Delta
(u_{a}^{\prime},v_{b}^{\prime})=0.$ Also, for $0<b<j$, if $u_{a}=u_{a}%
^{\prime}$ (i.e. $a=\frac{p_{j}}{2}$) then $\Delta(u_{a},w_{b})=0,$ else
$\Delta(u_{a},w_{b})+\Delta(u_{a}^{\prime},w_{b})=0.$ Completely analogously
one can obtain analogous results for pairs $u,v$ where $u\in P_{j}.$ Now, for
all unconsidered pairs of vertices $u,v\in P_{B}$ neither degrees change, nor
the distance, therefore for those pair it holds that $\Delta(u,v)=0.$ Hence,
we have considered all pairs $u,v\in P_{B}.$

It remains to consider pairs $u,v\in P_{A}$ where $u\in P_{A}\backslash
P_{B}.$ Now, let $u=u_{p_{j}+a}\in P_{A}\backslash P_{B}.$ If $v\in
P_{A}\backslash P_{B}$ then obviously $\Delta(u,v)=0,$ else if $v\in
\{w_{0},u_{p_{j}}\}$ we have
\begin{align*}
\Delta(u_{p_{j}+a},w_{0})+\Delta(u_{p_{j}+a},u_{p_{j}})  &  =\frac{1}{p_{j}%
+a}-\frac{1}{a}<0,\\
\Delta(w_{-a},w_{0})+\Delta(w_{-a},u_{p_{j}})  &  =\frac{1}{a}-\frac
{1}{a+p_{j}}>0,
\end{align*}
so we obtain $\Delta(u_{p_{j}+a},w_{0})+\Delta(u_{p_{j}+a},u_{p_{j}}%
)+\Delta(w_{-a},w_{0})+\Delta(w_{-a},u_{p_{j}})=0.$ Else if $v\in
\{w_{j},v_{p_{j}}\}$ we have
\begin{align*}
\Delta(u_{p_{j}+a},w_{j})+\Delta(u_{p_{j}+a},v_{p_{j}})  &  =\frac
{1}{j+a+p_{j}}-\left(  \frac{\delta_{G}(u_{p_{j}+a})+2}{a+j}-\frac{\delta
_{G}(u_{p_{j}+a})+1}{p_{j}+a+j+p_{j}}\right)  ,\\
\Delta(w_{j+a},w_{0})+\Delta(w_{j+a},u_{p_{j}})  &  =\left(  \frac{5}%
{a+j}-\frac{4}{j+a+p_{j}+p_{j}}\right)  -\frac{1}{j+a+p_{j}},
\end{align*}
so we obtain
\[
\Delta(u_{p_{j}+a},w_{j})+\Delta(u_{p_{j}+a},v_{p_{j}})+\Delta(w_{j+a}%
,w_{0})+\Delta(w_{j+a},u_{p_{j}})=\frac{3-\delta_{G}(u_{p_{j}+a})}{a+j}%
-\frac{3-\delta_{G}(u_{p_{j}+a})}{j+a+p_{j}+p_{j}}>0.
\]
Else if $v\in\{u_{0},\ldots,u_{p_{j}-1}\}$ we have $\Delta(u_{p_{j}+a},v)=0.$
Else if $v\in\{w_{1},\ldots,w_{j-1}\}$ we have
\[
\Delta(u_{p_{j}+a},w_{b})+\Delta(w_{-a},w_{b})=\frac{2-\delta_{G}(u_{p_{j}%
+a})}{a+b}-\frac{2-\delta_{G}(u_{p_{j}+a})}{a+b+p_{j}}>0.
\]
Else if $v\in\{v_{1},\ldots,v_{p_{j}-1}\}$ we have
\[
\Delta(u_{p_{j}+a},v_{b})+\Delta(w_{-a},v_{b}^{\prime})=\frac{2-\delta
_{G}(u_{p_{j}+a})}{a+j+p_{j}-b}-\frac{2-\delta_{G}(u_{p_{j}+a})}{a+p_{j}%
+j+b}>0.
\]

Therefore, we have considered and compensated all negative contributions due
to decrease in distances. Let us now consider all negative contributions due
to increase in degree of $u_{p_{j}}$ and $v_{p_{j}}.$ First, note that we have
already considered pairs $u_{p_{j}},v$ and $v_{p_{j}},v$ for which $v\in
P_{A}.$ For $u=u_{p_{j}}$ we have also already considered pairs where
$v=w_{-a}$ or $v=w_{j+a}$ ($a=1,\ldots,p_{0}-p_{j}$). Now, let $v$ be an
unconsidered vertex, then from $d_{G}(w_{0},v)\leq d_{G^{\prime}}(w_{0},v)$
and $d_{G}(u_{p_{j}},v)=d_{G^{\prime}}(u_{p_{j}},v)$ we have
\[
\Delta(u_{p_{j}},v)+\Delta(w_{0},v)\geq\frac{1}{d_{G}(w_{0},v)}-\frac{1}%
{d_{G}(u_{p_{j}},v)}>\left[  d_{G}(w_{0},v)<d_{G}(u_{p_{j}},v)\right]  >0.
\]
Completely analogously one obtains $\Delta(v_{p_{j}},v)+\Delta(w_{j},v)>0$ and
the proof of this subcase is over.

SUBCASE\ 2b. There are exactly $3$ branching vertices on $C.$

Let $w_{0},w_{j}$ and $w_{k}$ be three branching vertices on $C.$ Let us
denote $d_{1}=d_{G}(w_{0},w_{j}),$ $d_{2}=d_{G}(w_{j},w_{k}),$ $d_{3}%
=d_{G}(w_{k},w_{0}).$ Without loss of generality we may assume that $d_{3}%
\geq\max\{d_{1},d_{2}\}.$ We will also need $d_{4}=\min\{d_{1}+d_{2}%
,d_{3}\}\geq d_{2}.$ Let $P_{0}=w_{0}u_{1}\ldots u_{p_{0}},$ $P_{j}=w_{j}%
v_{1}\ldots v_{p_{j}},$ $P_{k}=w_{k}z_{1}\ldots z_{p_{k}}$ be paths appended
to branching vertices on $C$. Without loss of generality we may assume
$p_{0}\leq p_{k}.$ Let $G^{\prime}$ be the graph obtained from $G $ by
deleting edge $w_{j}v_{1}$ and adding the edge $u_{p_{0}}v_{1}.$ Note that
$G^{\prime}$ is a cycle-path graph with one branch less than $G.$ We have to
prove $H_{A}(G)>H_{A}(G^{\prime}).$ Note that distances possibly decrease only
for pairs $u,v$ for which $u\in P_{j}$ and $v\in G.$ The only vertex whose
degree increases is $u_{p_{0}}.$

We will first consider the problem with distances, so let $u=v_{a}\in P_{j}.$
Let $P_{A}$ be the shortest path in $G$ connecting vertices $v_{p_{j}}$ and
$u_{p_{0}},$ let $P_{A}^{\prime}$ be the shortest path in $G^{\prime}$
connecting vertices $v_{p_{j}}$ and $w_{j}.$ Let $v\mapsto v^{\prime}$ be an
isomorphism of paths $P_{A}$ and $P_{A}^{\prime}$ such that $v_{p_{j}%
}=v_{p_{j}}^{\prime}.$ Note that $w_{0}^{\prime}\in\{w_{1},\ldots,w_{j-1}\}$
since $p_{0}<j$ (supposition of Case 2). First, we will consider cases where
$v\in P_{A}.$ If $v=v_{b}\in P_{j},$ then we have $\Delta(v_{a},v_{b})=0.$
Else if $v\in\{w_{j},u_{p_{0}},w_{0}^{\prime},w_{0}\}$ we have%
\begin{align*}
\Delta(v_{a},u_{p_{0}})+\Delta(v_{a},w_{j})  &  =\frac{1}{a}-\frac{1}%
{a+d_{1}+p_{0}}>0,\\
\Delta(v_{a},w_{0})+\Delta(v_{a},w_{0}^{\prime})  &  =\frac{1}{a+d_{1}}%
-\frac{1}{a+p_{0}}<\left[  d_{1}>p_{0}\right]  <0,
\end{align*}
so obviously
\[
\Delta(v_{a},u_{p_{0}})+\Delta(v_{a},w_{j})+\Delta(v_{a},w_{0})+\Delta
(v_{a},w_{0}^{\prime})=\frac{1}{a}-\frac{1}{a+p_{0}}+\frac{1}{a+d_{1}}%
-\frac{1}{a+d_{1}+p_{0}}>0.
\]
Else if $v=v^{\prime}$ from $d_{G}(w_{j},v)=d_{G}(u_{p_{0}},v)$ we have
$\Delta(v_{a},v)=0.$ Else if $v\not =v^{\prime}$ from $d_{G}(v_{a},v^{\prime
})=d_{G^{\prime}}(v_{a},v)$ and $d_{G^{\prime}}(v_{a},v^{\prime})=d_{G}%
(v_{a},v)$ we have $\Delta(v_{a},v)+\Delta(v_{a},v^{\prime})=0.$ Therefore, we
have considered all cases where $v\in P_{A}.$ Let us now consider cases where
$v\in C\backslash P_{A}.$ Let $v\mapsto v^{\prime}$ be an automorphism of
cycle $C$ such that $w_{0}^{\prime}=w_{j}$ and $w_{j}^{\prime}=w_{0}.$ If
$v=v^{\prime}$ from $d_{G}(w_{j},v)=d_{G}(u_{p_{0}},v)$ we have%
\[
\Delta(v_{a},v)=\frac{\delta_{G}(v_{a})+\delta_{G}(v)}{d_{G}(v_{a}%
,w_{j})+d_{G}(w_{j},v)}-\frac{\delta_{G}(v_{a})+\delta_{G}(v)}{d_{G}%
(v_{a},w_{j})+p_{0}+d_{G}(w_{j},v)}>0
\]
else if $v\in\{w_{k},w_{k}^{\prime}\}$ we have
\begin{align*}
\Delta(v_{a},w_{k})+\Delta(v_{a},w_{k}^{\prime})  &  =\frac{\delta_{G}%
(v_{a})+3}{a+d_{2}}-\frac{\delta_{G}(v_{a})+3}{a+p_{0}+d_{4}}+\frac{\delta
_{G}(v_{a})+2}{a+d_{4}}-\frac{\delta_{G}(v_{a})+2}{a+p_{0}+d_{2}}>\\
&  >\left[  \frac{\delta_{G}(v_{a})+3}{a+d_{4}}-\frac{\delta_{G}(v_{a}%
)+3}{a+p_{0}+d_{4}}>0\right]  >\\
&  >\frac{\delta_{G}(v_{a})+3}{a+d_{2}}-\frac{\delta_{G}(v_{a})+2}%
{a+p_{0}+d_{2}}-\frac{1}{a+d_{4}}\geq\left[  d_{4}\geq d_{2}\right] \\
&  \geq\frac{\delta_{G}(v_{a})+2}{a+d_{2}}-\frac{\delta_{G}(v_{a})+2}%
{a+p_{0}+d_{2}}>0,
\end{align*}
else from $d_{G}(w_{j},v^{\prime})=d_{G}(w_{0},v)$ and $d_{G}(w_{0},v^{\prime
})=d_{G}(w_{j},v)$ we have
\[
\Delta(v_{a},v)+\Delta(v_{a},v^{\prime})=\frac{\delta_{G}(v_{a})+2}%
{a+d_{G}(w_{0},v)}-\frac{\delta_{G}(v_{a})+2}{a+p_{0}+d_{G}(w_{0},v)}%
+\frac{\delta_{G}(v_{a})+2}{a+d_{G}(w_{j},v)}-\frac{\delta_{G}(v_{a}%
)+2}{a+p_{0}+d_{G}(w_{j},v)}>0.
\]
Finally, we have to consider $v=z_{b}\in P_{k}.$ We have%
\[
\Delta(v_{a},z_{b})=\frac{\delta_{G}(v_{a})+\delta_{G}(z_{b})}{a+d_{2}%
+b}-\frac{\delta_{G}(v_{a})+\delta_{G}(z_{b})}{a+d_{4}+b}\geq\lbrack d_{2}\leq
d_{4}]\geq0.
\]
Therefore, we have considered and compensated all negative contributions due
to decrease in distances.

Let us now consider all negative contributions due to increase in degree of
$u_{p_{0}},$ so let $u=u_{p_{0}}.$ We have already considered pairs $u,v$
where $v\in P_{j}\backslash\{w_{j}\}.$ For the remaining possibilities for
$v,$ we will again use already introduced isomorphisms of $P_{A}$ and $C$. If
$v=w_{j}$ then $\Delta(u_{p_{0}},w_{j})=0,$ else if $v\in P_{A}\backslash
P_{j}$ we have%
\[
\Delta(u_{p_{0}},v)+\Delta(w_{j},v^{\prime})=\frac{1}{d_{G}(w_{j},v^{\prime}%
)}-\frac{1}{d_{G}(u_{p_{0}},v)}=\left[  d_{G}(u_{p_{0}},v)=d_{G}%
(w_{j},v^{\prime})\right]  =0,
\]
else if $v\in C\backslash P_{A}$, we have
\[
\Delta(u_{p_{0}},v)+\Delta(w_{j},v^{\prime})=\frac{1}{d_{G}(w_{0},v^{\prime}%
)}-\frac{1}{d_{G}(u_{p_{0}},v)}=\left[  d_{G}(w_{0},v^{\prime})=d_{G}%
(u_{p_{0}},v)\right]  =0,
\]
else if $v=z_{a}\in P_{k}\backslash\{w_{k}\}$ we have%
\[
\Delta(u_{p_{0}},z_{a})+\Delta(w_{j},z_{a})=\frac{1}{d_{2}+a}-\frac{1}%
{p_{0}+d_{4}+a}>0.
\]

SUBCASE 2c. There are exactly $2$ branching vertices on $C.$ Let $w_{0}$ and
$w_{j}$ be branching vertices on $C$ and let $P_{0}=w_{0}u_{1}\ldots u_{p_{0}%
},$ $P_{j}=w_{j}v_{1}\ldots v_{p_{j}}$ be paths appended to branching
vertices. Let $G^{\prime}$ be a graph obtained from $G$ by deleting the edge
$w_{j}v_{1}$ and adding the edge $u_{p_{0}}v_{1}$ instead. Graph $G^{\prime}$
is obviously cycle-path graph with only one branch. Proof that $H_{A}%
(G)>H_{A}(G^{\prime})$ is completely analogous to the proof of subcase 2b, one
just doesn't have to consider vertex $w_{k}$ separately and there are no
vertices $z_{k}.$

So, in all cases we have proved $H_{A}(G)>H_{A}(G^{\prime}).$ Since in all
cases $G^{\prime}$ is a cycle-path graph with one branch less than in $G,$ we
have either obtained cycle path $G^{\prime}$ which has only one branch, or by
repeating the transformation finitely many times we will obtain such graph.
Therefore, the lemma is proved.
\end{proof}

\begin{figure}[h]
\begin{center}
{\includegraphics[scale=0.2]{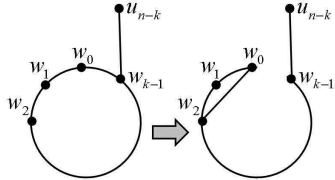}}
\end{center}
\caption{Graph transformation from the proof of Lemma
\ref{lema2_3increasingTail}.}%
\label{Figure12}%
\end{figure}

\begin{lemma}
\label{lema2_3increasingTail}Let $G=C_{k,n-k}$ where $k\geq4$ and $n\geq5.$
Then $H_{A}(G)>H_{A}(C_{3,n-3}).$
\end{lemma}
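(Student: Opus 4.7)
The natural strategy is to shrink the cycle by one vertex at a time. I would define a transformation that takes $CP_{k,n-k}$ to $CP_{k-1,n-k+1}$ by ``peeling off'' a cycle vertex and re-attaching it at the far end of the tail. Concretely, if $G=CP_{k,n-k}$ has cycle $w_{1}w_{2}\ldots w_{k}w_{1}$ and (possibly empty) tail $w_{1}u_{1}\ldots u_{n-k}$, form $G'$ by deleting the edges $w_{k-1}w_{k}$ and $w_{k}w_{1}$ and inserting the edges $w_{k-1}w_{1}$ (closing a cycle of length $k-1$ through $w_{1},\ldots,w_{k-1}$) and $u_{n-k}w_{k}$ (appending $w_{k}$ as the new end of the tail; if $n-k=0$, attach $w_{k}$ directly to $w_{1}$). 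Then $G'=CP_{k-1,n-k+1}$, and once I establish $H_{A}(G)>H_{A}(G')$ for every $k\geq 4$, iterating the inequality $k-3$ times yields $H_{A}(CP_{k,n-k})>H_{A}(CP_{3,n-3})$.

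The plan for the single-step inequality is to partition pairs $u,v$ according to what changes. Only two vertices have their degree affected: $\delta(w_{k})$ drops from $2$ to $1$ and the degree of the former tail endpoint rises by $1$; these net to zero. The pairs whose distance changes split into two families: (a) every pair involving $w_{k}$, whose distance increases substantially, since $w_{k}$ moves from a cycle vertex within $\lfloor k/2\rfloor$ of every other cycle vertex to the end of a tail of length $n-k+1$; and (b) cycle pairs $w_{i},w_{j}$ with $i,j\in\{1,\ldots,k-1\}$ whose shortest path in $G$ ran through $w_{k}$ and whose shortest path in $G'$ now goes the opposite way around the shorter cycle. All other pairs have $\Delta(u,v)=0$.

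The large positive contributions $\Delta(w_{k},v)$ from family (a), in which distances increase by roughly $n-k$, furnish a generous budget to absorb both the negative effect of the degree decrease at $w_{k}$ and the negative contributions of family (b). The bookkeeping I envisage pairs each $(w_{k},v)$ with its reflected partner along the new tail, and pairs each distance-shrinking cycle term $(w_{i},w_{j})$ with a suitably chosen $(w_{k},z)$ term whose distance grows by at least as much. The main obstacle is the case analysis: tracking which cycle pairs actually shorten (those whose ``short way'' previously used $w_{k}$), quantifying by how much, and matching them to compensating $(w_{k},\cdot)$ terms; the boundary case $n-k=0$ requires slight care because the tail endpoint coincides with the branching vertex and thus its degree also changes. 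Once $H_{A}(G)>H_{A}(G')$ is verified, chaining the strict inequalities completes the proof.
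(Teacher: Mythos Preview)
Your iterative strategy has a genuine gap: the single-step inequality $H_{A}(CP_{k,n-k})>H_{A}(CP_{k-1,n-k+1})$ is \emph{false} in general. The failure occurs exactly where you flagged ``slight care'' --- when the tail is empty. For instance, take $k=n=5$, so $G=C_{5}$ and $G'=CP_{4,1}$: a direct computation gives $H_{A}(C_{5})=30$ while $H_{A}(CP_{4,1})=\tfrac{61}{2}$, so $H_{A}(G)<H_{A}(G')$ and your first step goes the wrong way. The same happens for $n=6$: $H_{A}(C_{6})=40<42=H_{A}(CP_{5,1})$. Intuitively, when $n-k=0$ your relocated vertex $w_{k}$ becomes a leaf adjacent to $w_{1}$, so its distances barely grow, while the degree bump at $w_{1}$ (from $2$ to $3$) pushes $H_{A}$ up. Your ``generous budget'' from family~(a) is an illusion in this regime: distances to $w_{k}$ increase by roughly $n-k$, which is zero here.

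The paper avoids this trap by using a completely different, \emph{one-shot} transformation rather than an iteration. With the cycle labelled $w_{0}w_{1}\ldots w_{k-1}$ and the tail (if any) hanging from $w_{k-1}$, it deletes $w_{0}w_{k-1}$ and adds $w_{0}w_{2}$, producing the triangle $w_{0}w_{1}w_{2}$ with the long path $w_{2}w_{3}\ldots w_{k-1}u_{1}\ldots u_{n-k}$ attached at $w_{2}$; this is already $CP_{3,n-3}$. The bookkeeping then only has to handle the degree increase at $w_{2}$ and the few pairs $(w_{0},w_{i})$ whose distance shrinks, pairing each with a suitable $(w_{0},w_{k-i+1})$ or $(w_{k-1},\cdot)$ term. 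No vertex is sent to the far end of the tail, so the argument never needs the tail to be long. If you want to salvage an inductive approach, you would need a different single step --- e.g.\ peeling a cycle vertex into the \emph{near} end of the tail rather than the far end --- and even then the pure-cycle start would require separate treatment.
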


\begin{proof}
Let us denote vertices in $G$ so that for the only cycle in $G$ holds
$C=w_{0}w_{1}\ldots w_{k-1}$ where $4\leq k\leq n.$ If there is a branching
vertex in $G,$ without loss of generality we may assume it is $w_{k-1}$ and
$P_{k-1}=w_{k-1}u_{1}\ldots u_{n-k}$ is the only path appended to $w_{k-1}.$
Let $G^{\prime}$ be a graph obtained from $G$ by deleting edge $w_{0}w_{k-1} $
and adding the edge $w_{0}w_{2}$ instead. This transformation is illustrated
in Figure \ref{Figure12}. Note that $G^{\prime}=CP_{3,n-3}.$ We have to prove
$H_{A}(G)>H_{A}(G^{\prime}).$ Note that distances in this transformation
decrease only for $u,v$ where $u=w_{0}$ and $v=w_{i}$ ($i=2,\ldots
,\left\lfloor k/2\right\rfloor $). The only vertex whose degree increases is
$w_{2}.$ For the ease of proving the lemma, we introduce $d_{m}=\min\{3,k-3\}$
and the path $P_{A}=w_{2}w_{3}\ldots w_{k-1}$ in $G$ with the automorphism
$v\mapsto v^{\prime}$ of $P_{A}$ such that $w_{2}^{\prime}=w_{k-1}.$ Now, we
distinguish two cases with respect to whether $G=C_{n}$ ($k=n$) or
$G=G=C_{k,n-k}$ ($4\leq k\leq n-1$).

CASE 1. Let $G=C_{n}$ ($k=n$). We will first consider the problem with
distances. Let $u=w_{0}.$ If $v=w_{2}$ we have
\[
\Delta(w_{0},w_{2})+\Delta(w_{0},w_{k-1})=\frac{1}{2}\frac{7k-20}{k-2}>0,
\]
else if $v=w_{i}$ ($i=3,\ldots,\left\lfloor k/2\right\rfloor $) we have
\[
\Delta(w_{0},w_{i})+\Delta(w_{0},w_{k-i+1})=\frac{4}{i}-\frac{4}{k-i}%
\geq\lbrack i\leq k-i]\geq0.
\]
Let us now consider the problem with the increase in degree of $w_{2},$ so let
$u=w_{2}.$ Note that we have already considered pairs $u,v$ where $v=w_{0}.$
If $v=w_{1}$ we have
\[
\Delta(w_{2},w_{1})+\Delta(w_{k-1},w_{1})=\frac{k-5}{k-2}\geq\lbrack
k=n\geq5]\geq0,
\]
else if $v=w_{k-1}$ we have
\[
\Delta(w_{2},w_{k-1})=\frac{2+2}{d_{m}}-\frac{3+1}{k-3}\geq\lbrack k-3\leq
d_{m}]\geq0,
\]
else using the automorphism of $P_{A}$ (and supposing $d_{P_{A}}(w_{2},v)\leq
d_{P_{A}}(w_{2},v^{\prime})$) from $d_{G}(w_{2},v)=d_{G}(w_{k-1},v^{\prime})$
we have $\Delta(w_{2},v)+\Delta(w_{k-1},v^{\prime})=0.$

CASE 2. Let $G=C_{k,n-k}.$ Again, we first consider the 'problem' with
distances. Let $u=w_{0},$ we have to consider $v=w_{2}$ and $v\in
\{w_{3},\ldots,w_{\left\lfloor k/2\right\rfloor }\}.$ We have
\begin{align*}
\Delta(w_{0},w_{2})+\Delta(w_{0},w_{k-1})  &  =\frac{2k-8}{k-2}\geq\lbrack
k\geq4]\geq0\\
\Delta(w_{0},w_{i})+\Delta(w_{0},w_{k-i+1})  &  =\frac{4}{i}-\frac{4}{k-i}%
\geq\lbrack i\leq k-i]\geq0.
\end{align*}
Now, let us consider the problem with the increase in degree of $w_{2},$ so
let $u\in w_{2}.$ Note that we have already considered $v=w_{0}.$ We have to
consider $v=w_{1},$ $v=w_{k-1},$ $v\in P_{A}\backslash\{w_{1},w_{k-1}\},$
$v=u_{a}\in P_{k-1}.$ We have
\begin{align*}
\Delta(w_{2},w_{1})+\Delta(w_{k-1},w_{1})  &  =\frac{1}{2}\frac{3k-14}%
{k-2}>0\text{ for }k\geq5,\\
\Delta(w_{2},w_{k-1})  &  =\frac{2+3}{d_{m}}-\frac{3+2}{k-3}\geq\lbrack
d_{m}\leq k-3]\geq0.
\end{align*}
Further, assuming $d_{P_{A}}(w_{v},v)\leq d_{P_{A}}(w_{v},v^{\prime})$ we have
$\Delta(w_{2},v)+\Delta(w_{k-1},v^{\prime})=0.$ Finally, using $d_{m}\leq k-3$
we obtain
\[
\Delta(w_{2},u_{a})+\Delta(w_{k-1},u_{a})\geq\frac{1}{a}-\frac{1}{d_{m}+a}>0.
\]
Therefore, the only problem is $\Delta(w_{2},w_{1})+\Delta(w_{k-1},w_{1})$ for
$k=4.$ But note that in that case $d_{m}=\min\{4,1\}=1,$ so we have
\[
\Delta(w_{2},w_{1})+\Delta(w_{k-1},w_{1})+\Delta(w_{2},u_{1})+\Delta
(w_{k-1},u_{1})=\frac{1}{2}\frac{12-14}{4-2}+\frac{1}{1}-\frac{1}{1+1}=0.
\]
Note that in this case we have not proved strict inequality if there is only
one $u_{a},$ i.e. if $n=5.$ But in that case it is easily verified that
$H_{A}(C_{4,1})>H_{A}(C_{3,2}).$ The positive contribution which makes the
difference is $\Delta(w_{0},u_{a})$, but which was not considered in the proof.
\end{proof}

Note that $C_{4}$ and $C_{3,1}$ are the only unicyclic graphs on $n=4$
vertices. It holds that%
\[
H_{A}(C_{4})=20<21=H_{A}(C_{3,1}).
\]
So, for $n=4$ graph $C_{4}$ is the only minimal unicyclic graph, while for
$n\geq5$ the answer to the question of minimal unicyclic graph is given by the
following theorem.

\begin{theorem}
\label{tm2_minimalGraph}Let $G\in\mathcal{U}(n)$ for $n\geq5.$ Then
\[
H_{A}(G)\geq4%
%TCIMACRO{\dsum _{i=1}^{n-2}}%
%BeginExpansion
{\displaystyle\sum_{i=1}^{n-2}}
%EndExpansion
H_{n-i-1}+H_{n-3}+3H_{n-2}+\frac{6n-13}{n-2}%
\]
with equality if and only if $G=CP_{3,n-3}.$
\end{theorem}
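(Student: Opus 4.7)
The plan is to mirror the structure used in Section 3 for the maximal case: chain together the three transformation lemmas of this section so that any $G \in \mathcal{U}(n)$ can be reduced to $CP_{3,n-3}$ by a finite sequence of operations, each of which strictly decreases (or, in the trivial case where $G$ is already the target, preserves) the value of $H_A$.

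Concretely, I would argue in three stages. First, apply Lemma \ref{lema2_1subtreesToPaths} repeatedly until every branch hanging off the cycle is a simple path; this produces a cycle-path graph $G_1 \in \mathcal{U}(n)$ with $H_A(G_1) \leq H_A(G)$, strict unless $G$ was already a cycle-path graph. Second, if $G_1$ has at least two branching vertices, invoke Lemma \ref{lema2_2decreasingBranches} (iteratively if necessary) to obtain a cycle-path graph $G_2 = CP_{k,n-k}$ for some $3 \leq k \leq n$ with $H_A(G_2) < H_A(G_1)$. Third, if $k \geq 4$, apply Lemma \ref{lema2_3increasingTail} to pass from $CP_{k,n-k}$ to $CP_{3,n-3}$, again strictly decreasing $H_A$. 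Combining the three stages gives $H_A(G) \geq H_A(CP_{3,n-3})$ with equality iff none of the reductions is strict, i.e.\ iff $G = CP_{3,n-3}$ to begin with.

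The one subtle case is $G = C_n$, which has no branching vertex at all and so Lemmas \ref{lema2_1subtreesToPaths} and \ref{lema2_2decreasingBranches} do not apply; however $C_n = CP_{n,0}$ with $n \geq 5 \geq 4$ falls directly under Lemma \ref{lema2_3increasingTail}. The numerical bound in the statement is then simply the value $H_A(CP_{3,n-3}) = 4\sum_{i=1}^{n-2} H_{n-i-1} + H_{n-3} + 3H_{n-2} + \frac{6n-13}{n-2}$ supplied by Lemma \ref{lema0_valueExtremal}(2). Since all the nontrivial work was done inside the three reduction lemmas, no real obstacle remains at the theorem level; the only thing worth double-checking is that the case analysis inside Lemma \ref{lema2_2decreasingBranches} (cases 1, 2a, 2b, 2c) genuinely exhausts every cycle-path graph with at least two branches, so that the reduction chain has no gaps.
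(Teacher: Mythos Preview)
Your proposal is correct and matches the paper's own proof essentially line for line: chain Lemmas \ref{lema2_1subtreesToPaths}, \ref{lema2_2decreasingBranches}, and \ref{lema2_3increasingTail} to reduce any $G\in\mathcal{U}(n)$ to $CP_{3,n-3}$ with strict decrease of $H_A$ at each step, handle $G=C_n$ directly via Lemma \ref{lema2_3increasingTail}, and read off the bound from Lemma \ref{lema0_valueExtremal}(2). The paper's proof is exactly this, stated more tersely.
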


\begin{proof}
Using Lemmas \ref{lema2_1subtreesToPaths}, \ref{lema2_2decreasingBranches} and
\ref{lema2_3increasingTail} we first transform a unicyclic graph to cycle-path
graph, then we decrease the number of branches in obtained cycle-path graph,
so that finally we can transform it to $CP_{3,n-3}.$ In each of these
transformations the value of $H_{A}$ strictly decreases, so $CP_{3,n-3}$ is
the only extremal graph. Now, the bound follows from Lemma
\ref{lema0_valueExtremal}. Note that the case of $C_{n}$ is covered by Lemma
\ref{lema2_3increasingTail}.
\end{proof}

\section{Conclusion}

In this paper we defined cycle-star graph $CS_{k,n-k}$ to be a graph
consisting of cycle of length $k$ and $n-k$ leafs appended to the same vertex
of the cycle. Also, we defined cycle-path graph to be a graph consisting of
cycle of length $k$ and of path on $n-k$ vertices whose one end is linked to a
vertex on a cycle. We establish that $CS_{3,n-3}$ is the only maximal
unicyclic graph (see Theorem \ref{tm1_maximalGraph}), while $CP_{3,n-3}$ is
the only minimal unicyclic graph (see Theorem \ref{tm2_minimalGraph}), with
respect to additively weighted Harary index. The values of additively weighted
Harary index of $CS_{3,n-3}$ and $CP_{3,n-3}$ are established in Lemma
\ref{lema0_valueExtremal}, so these values are the upper and the lower bound
for the value of Harary index on the class of unicyclic graphs. For further
research it would be interesting to investigate the values of Harary index on
classes of graphs with given parameters, the relation of this variant of
Harary index with other topological indices and similar.

\section{Acknowledgements}

The support of the EUROCORES Programme EUROGIGA (project GReGAS) of the
European Science Foundation is gratefully acknowledged.

\bigskip

\end{document}